\numberwithin{equation}{section}
\newtheorem{theorem}{Theorem}[section]
\newtheorem{lemma}[theorem]{Lemma}
\newtheorem{remark}{Remark}
\numberwithin{equation}{section}
\def\vint_#1{\mathchoice%
          {\mathop{\kern 0.2em\vrule width 0.6em height 0.69678ex depth -0.58065ex
                  \kern -0.8em \intop}\nolimits_{\kern -0.4em#1}}%
          {\mathop{\kern 0.1em\vrule width 0.5em height 0.69678ex depth -0.60387ex
                  \kern -0.6em \intop}\nolimits_{#1}}%
          {\mathop{\kern 0.1em\vrule width 0.5em height 0.69678ex
              depth -0.60387ex
                  \kern -0.6em \intop}\nolimits_{#1}}%
          {\mathop{\kern 0.1em\vrule width 0.5em height 0.69678ex depth -0.60387ex
                  \kern -0.6em \intop}\nolimits_{#1}}}
                  \newcommand{\aveint}[2]{\mathchoice%
          {\mathop{\kern 0.2em\vrule width 0.6em height 0.69678ex depth -0.58065ex
                  \kern -0.8em \intop}\nolimits_{\kern -0.45em#1}^{#2}}%
          {\mathop{\kern 0.1em\vrule width 0.5em height 0.69678ex depth -0.60387ex
                  \kern -0.6em \intop}\nolimits_{#1}^{#2}}%
          {\mathop{\kern 0.1em\vrule width 0.5em height 0.69678ex depth -0.60387ex
                  \kern -0.6em \intop}\nolimits_{#1}^{#2}}%
          {\mathop{\kern 0.1em\vrule width 0.5em height 0.69678ex depth -0.60387ex
                  \kern -0.6em \intop}\nolimits_{#1}^{#2}}}
\begin{document}

\title{\bf Coupled local/nonlocal models in thin domains}

\author{Bruna C. dos Santos, Sergio M. Oliva and Julio D. Rossi}

\maketitle

\begin{abstract} 
In this paper, we analyze a model composed by coupled local and nonlocal diffusion equations
acting in different subdomains.
We consider the limit case when one of the subdomains is thin in one direction (it is
concentrated to a domain of smaller dimension) and 
as a limit problem we obtain coupling between local and nonlocal equations
acting in domains of different dimension. We find existence and uniqueness 
of solutions and we prove several qualitative properties (like conservation of mass and 
convergence to the mean value of the initial condition as time goes to infinity).
\end{abstract}

\noindent{\makebox[1in]\hrulefill}\newline2010 \textit{Mathematics Subject
Classification.} 
35K55, 
35B40, 
35A05. 
\newline\textit{Keywords and phrases.}  Nonlocal diffusion, heat equation, asymptotic behavior.


\section{Introduction and main results}

In this paper we combine a local diffusion equation, the classical heat equation,
\begin{equation} \label{ec.calor}
\frac{\partial u}{\partial t} (x,t) = \Delta u (x,t)
\end{equation}
in a higher dimensional domain $\Omega \subset \mathbb{R}^N$,
with a nonlocal diffusion equation, given by an integrable kernel
\begin{equation} \label{ec.no-local}
\frac{\partial u}{\partial t} (x,t) = \int_{R} J(x-y) (u(y,t) -u(x,t) ) dy
\end{equation}
in $R$ a different subset of $\mathbb{R}^{N}$. Associated with these two domains, $\Omega$ and $R$,
in \cite{GQR} and \cite{santos2020} the following kind of energy functional was introduced
\begin{equation}\label{neumann-energy.intro}
\begin{array}{l}
    E(u,v)  \displaystyle:=\frac{1}{2}\int_{\Omega} |\nabla  u |^2 dx 
    + \frac{1}{4}\int_{R}\int_{R}J(x-y)\left(v(y)-v(x)\right)^2 dydx  
    + \frac{1}{2}\int_{R}\int_{A}G(x-y)\left(v(x)-u(y)\right)^2 dy dx.
\end{array}
\end{equation}
Here the set $A \subset \Omega$ is the whole $\Omega$ (and we will refer to the resulting model as
having a coupling in the source terms, see the next subsection) or a part of the boundary
$A=\Gamma \subset \partial \Omega$ (we refer to this case as coupling at the boundary).

Observe that, the kernels $J$ and $G$ do not need to be equal. 
We will assume that $J$ and also $G$ satisfy the following hypotheses that will be assumed along the whole paper without further mention,
\begin{flalign*}
 & \text{ $J \in C (\mathbb{R}^{N},\mathbb{R})$ is nonnegative, with $J(0)>0$, $J(-x)=J(x)$ for every $x \in \mathbb{R}^{N}$, and integrable,}  \\
 & \text{ $G \in C(\mathbb{R}^{N},\mathbb{R})$ is nonnegative, nontrivial and integrable.} 
\end{flalign*}

\begin{remark} {\rm 
We can also consider kernels that are not in convolution form, that is, $J(x,y)$ and $G(x,y)$ with
$J \in C (\mathbb{R}^{N} \times \mathbb{R}^{N},\mathbb{R})$ nonnegative, with $J(x,x)>0$, symmetric $J(x,y)=J(y,x)$
and integrable, and $G \in C(\mathbb{R}^{N} \times \mathbb{R}^{N},\mathbb{R})$ nonnegative, nontrivial and integrable.
To simplify the presentation we will deal with convolution type kernels in the proofs. 

Observe that it is common to assume that the integral of $J$ and $G$ is equal to one.
This assumption is related to the probabilistic interpretation of the model given in \cite{GQR} and \cite{santos2020}. For example, in this interpretation, $G((x_1,x_2),y)$ is the probability of a particle (or an individual of a biological species) that is at $(x_1,x_2)$ jumps to $y$ in a time step). So, in this case, we have
\begin{equation*}
\int_{R_{1}} G(x_1,x_2,y) dy = 1.
\end{equation*}
To obtain our results we only need the integrability of the kernels, hence we do not assume that they are normalized to have
integral equal to one.
}
\end{remark}

Associated with the energy \eqref{neumann-energy.intro} we have the evolution problem given by its gradient flow 
(with respect to $L^2(\Omega \cup R)$). 
This gives rise to an diffusion problem. Take
$(u,v)$ as the solution of the abstract $ODE$ problem $$(u,v)'(t)=-\partial E\left[(u,v)(t)\right], \qquad t\geq 0,$$ 
with $u(x,0)=u_{0}(x)$, $v(x,0) =v_0(x)$ and, $\partial E\left[(u,v)\right]$ the subdifferential of $E$. Then, it turns out
(see \cite{GQR} and \cite{santos2020}) that $(u,v)$ solves a system composed by a heat equation (local diffusion)
of the form \eqref{ec.calor} in 
$\Omega$ and a nonlocal difusion equation in $R$, \eqref{ec.no-local}, coupled via source terms 
in the equations (when $A=\Omega$ in \eqref{ec.no-local}) or via a boundary flux on $\Gamma \subset \partial \Omega$
(when $A=\Gamma$ in \eqref{ec.no-local}). See Sections \ref{neumamm-case.source} and \ref{neumamm-case}
 below.

Also from \cite{GQR} and \cite{santos2020} we know that
the associated evolution problem is well-posed in the sense that there are existence and uniqueness of solutions. 
There are two alternative proofs of this fact. 
The first one uses a fixed point argument while the second relies on semigroup theory. Besides, a comparison 
principle holds. Also, the total mass of the initial condition is preserved along the evolution and the solutions converge exponentially fast to the mean value of the initial condition. 
Notice that, according to \cite{GQR} and \cite{santos2020}, we do not impose any continuity of the densities 
 troughout the interface between the local and nonlocal domain, 
but we can guarantee continuity of the densities $u$ and $v$ inside the local and nonlocal subdomains $\Omega$ and $R$, respectively, by assuming continuity of the initial conditions. 
Also there is a probabilistic interpretation of this model (we refer one more time to \cite{GQR} and \cite{santos2020}). 
In this interpretation individuals cannot diffuse neither jump from the exterior $\mathbb{R}^{N}\setminus \Omega$ into $\Omega$ or the other way around (the integrals accounting for jumps do not consider the complement of $\Omega$). 
There is no interchange of mass between $\Omega \cup R$ and its complement. 
Therefore, the total mass is preserved and we can call our problem as being of Neumann type.

The study of nonlocal problems with smooth kernels has been widely considered recently, see
\cite{BCh,Bere,CF,ChChRo,Cortazar-Elgueta-Rossi-Wolanski,delia1,F,FW,Gal} and the book \cite{andreu2010nonlocal}. This kind of equation is getting attention due to its potential applications in ecology, physics, and engineering, and to its flexibility to accurately capture effects that are not easily obtained from classical local models. Biological mobility models of animals and plants are examples of how distinct patterns of mobility can affect the success of invasions \cite{Bere, Strick}. In epidemiology, the effects of long-range interactions are responsible for the spreading of diseases around the world \cite{wang}. Nonlocal patterns also play an important role in molecular interactions in dissimilar interfaces, continuum mechanics, \cite{Han,Sel}, and peridynamics (a model of elasticity and mechanics), \cite{Sil,Sil1}. 

There are different strategies for couplings between local and nonlocal models. Let us briefly summarize previous results in \cite{delia2,Du,Gal,GQR,Kri,santos2020}, see also the review \cite{deliaYY}. In \cite{delia2}, local and nonlocal problems are coupled trough a prescribed solid region in which both kinds of equations overlap (the value of the solution in the nonlocal part of the domain is used as a Dirichlet boundary condition for the local part and vice-versa). This kind of coupling gives continuity of the solution in the overlapping region but does not preserve the total mass. Here we follow \cite{GQR} and \cite{santos2020} (see also \cite{Gal,Kri}). In probabilistic terms, in the model described in \cite{GQR}, particles may jump across the interface between the two regions but can not pass coming from the local side unless they jump.
Finally, in \cite{santos2020}, the authors studied local and nonlocal diffusion models in different zones coupled via the fluxes across the surface that separates the two regions.

Here, we take as the nonlocal region a thin domain, that is, we consider 
$R_{\varepsilon} \subset \mathbb{R}^{N}$ ($R_{\varepsilon}$ is assumed to be open and bounded), depending on a small parameter $\varepsilon \in (0,1]$ that will go to zero and 
that measures the thickness of the domain. Therefore, in our model problem we have two full dimensional domains, 
the local domain $\Omega \subset \mathbb{R}^{N}$ (that is fixed) and the nonlocal domain $R_{\varepsilon} \subset \mathbb{R}^{N} = \mathbb{R}^{N_1}\times \mathbb{R}^{N_2}$. 
We denote $x = (x_1,x_2)$ a point in $\mathbb{R}^N=\mathbb{R}^{N_1}\times \mathbb{R}^{N_2}$. 
The domain $R_{\varepsilon}$ is assumed to be a general thin domain defined as
$$
R_{\varepsilon} = \big\{(x_1,\varepsilon x_2) \in \mathbb{R}^{N_1}\times \mathbb{R}^{N_2} : (x_1,x_2) \in R \big\},
$$
with $R\subset \mathbb{R}^{N} = \mathbb{R}^{N_1}\times \mathbb{R}^{N_2}$. Notice that $R_\varepsilon$ is a 
domain that is thin in the $x_2$-variable. See Figure 1.

Our main goal here is to pass to the limit as $\varepsilon \to 0$ in the previous setting and 
obtain a nontrivial diffusion model in which we couple local and nonlocal diffusion 
equations, \eqref{ec.calor} and \eqref{ec.no-local} that take place in domains of different dimension 
(we deal here with local diffusion in the full-dimensional domain and nonlocal diffusion in the lower-dimensional one).

For simplicity, we will concentrate in the product case and take $R_{\varepsilon}$
as  $$R_{\varepsilon} = R_1 \times \varepsilon R_2 =\{(x_1, \varepsilon x_2): x_1 \in R_1 ,  x_2 \in R_2 \}.$$ Our results are valid in a more
general setting (see Remark \ref{rem.intro} below) but we prefer to avoid extra notations and simplify the
changes of variables that are needed in the proofs. The typical configuration under study is depicted in
Figure 1.

\begin{remark}  \label{rem.intro} {\rm
Instead of a thin domain like $R_{\varepsilon} = \{(x_1, \varepsilon x_2): x_1 \in R_1 ,   x_2 \in R_2 \}$, 
we could have a more complex domain, which could be described by some function $g$ related to the geometry of the channel $R_{\varepsilon}$, 
more exactly, on the way the channel $R_{\varepsilon}$ collapses to a general manifold $R_1$. If we want to construct a more general 
geometry of the channel we could, for instance, in two dimensions, consider the channel $R_{\varepsilon} = \{(x, y): 0 < x_1 < 1, 0 < x_2 < \varepsilon g(x_1)\}$, although more general and complicated geometries are allowed, see \cite{arrieta1}.}
\end{remark}

\begin{figure}[ht!]
\centering
\includegraphics[width=8.5cm]{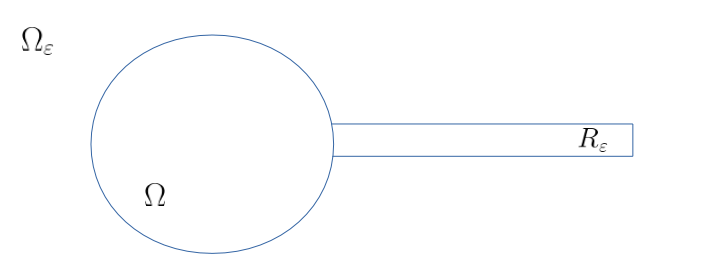}
\caption{Perturbed domain.}
\label{fig1}
\end{figure}

\begin{figure}[ht!]
\centering
\includegraphics[width=8.5cm]{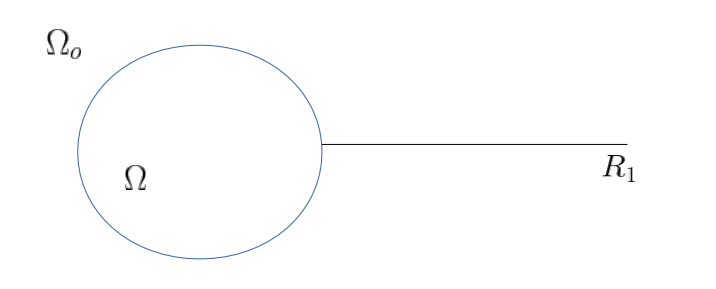}
\caption{Limit domain.}
\label{fig2}
\end{figure}

{\bf Main goal.} Let $\Omega_{\varepsilon}= \Omega \cup R_{\varepsilon} \subset \mathbb{R}^{N}$ and 
consider a local/nonlocal coupling in this domain (see subsections \ref{neumamm-case.source} and 
\ref{neumamm-case} for a precise statement of the involved equations and the obtained results). 
As we have mentioned, our main goal is to study the limit as the nonlocal region, $R_{\varepsilon}$,
gets thinner, that is, to study the limit as $\varepsilon \to 0$. When passing the limit as $\varepsilon \to 0$, the "limit" domain, $\Omega_0$ (see figure \ref{fig2}) will be the union of $\Omega$ and the lower dimensional domain $R_{1}$. 
In the limit of the solutions to our coupled models we will obtain solutions to a local equation in the domain $\Omega$ (with a nonlocal source) 
and a nonlocal equation in a domain of smaller dimension, $R_{1}$. After obtaining the limit equations, we will also prove some qualitative 
properties of this limit problem (like conservation of the total mass and study the asymptotic behaviour of the solutions).

Concerning references for equations in thin domains we refer to \cite{arrieta1,arrieta2,arrieta3,pereira-rossi,arrieta-carvalho,shuichi} that develop some techniques and methods to understand the effects of the geometry of the thin domain on the solutions of elliptic and parabolic singular problems. We can find some applications in elastic beam theories (as torsion and warping functions) \cite{rodriguez}, lubrification \cite{bayada}, fluid flows as ocean dynamics, geophysical fluid dynamics, and fluid flows in cell membranes, see for instance \cite{raugel}.

Our results can be viewed as an extension of \cite{arrieta1} and \cite{pereira-rossi}. In \cite{arrieta1}, the authors 
investigate the dynamics of a local reaction-diffusion equation with homogeneous boundary condition in a dumbbell domain. 
The dumbbell domain is composed by two disconnected regions joined by a thin channel, that depends on a thickness parameter $\varepsilon$ and degenerates to a line segment as the parameter $\varepsilon \to 0$. As part of a series of articles (see \cite{arrieta2,arrieta3}) the authors also 
prove some properties about the continuity of the set of equilibria.
On the other hand, in \cite{pereira-rossi} the authors deal with nonlocal evolution problems with non singular kernels in thin domains
obtaining a limit problem when the thickness of the domain goes to zero, but without considering any coupling
with a local part of the problem. Passing to the limit in these coupling terms is the main contribution of this work.

\subsection{Coupling using source terms}\label{neumamm-case.source}

We need to compare the solutions of the problem posed in the perturbed domain $\Omega_{\varepsilon}= \Omega \cup R_{\varepsilon} \subset \mathbb{R}^{N}$ and the solutions to the limit problem in the limit domain $\Omega_{0}$. Since the solutions live in different spaces, to obtain convergence we need some care, not only 
in the choice of the functional space, but also with the metric chosen in this space.
Decomposing a function $w \in L^2 (\Omega_{\varepsilon})$ as $w =u+v$, with $u =w \chi_{\Omega}$
and $v = w \chi_{R_{\varepsilon}}$, we define the metric in $L^2(\Omega_{\varepsilon})$ as
\begin{equation}\label{different-metric}
\| w \|^2_{L^2 {(\Omega_{\varepsilon})}}=\int_{\Omega} |u|^2 + \frac{1}{\varepsilon^{N_2}} \int_{R_{\varepsilon}}|v|^2.
\end{equation}
Remark that we multiply the norm of the involved functions in the thin part of the domain $R_{\varepsilon}$ 
by a factor $\varepsilon^{-(N_2)}$. 
Now, we can define the energy functional
\begin{equation}\label{neumann-energy.44}
    E_\varepsilon (u,v)  \displaystyle:=\frac{1}{2}\int_{\Omega} |\nabla  u |^2 dx 
    + \frac{1}{4 \varepsilon^{2N_2}}\int_{R_{\varepsilon}}\int_{R_{\varepsilon}}J(x-y) |v(y)-v(x)|^2 dydx  + \frac{1}{2\varepsilon^{N_2}}\int_{R_{\varepsilon}}\int_{\Omega}G(x-y)|v(x)-u(y)|^2 dy dx,
\end{equation}
which is finite in 
$$\mathcal{B}:=\left\{(u,v) \in L^2(\Omega_{\varepsilon}): u \in H^1(\Omega),  v \in L^2(R_{\varepsilon}) \right\}.$$
Notice that in this energy functional we have two terms,
$$
\frac{1}{2}\int_{\Omega} |\nabla  u |^2 dx \qquad \mbox{and} \qquad \frac{1}{4 \varepsilon^{2N_2}}\int_{R_{\varepsilon}}\int_{R_{\varepsilon}}J(x-y)\left(v(y)-v(x)\right)^2 dydx,
$$
that are naturally associated with the equations \eqref{ec.calor} and \eqref{ec.no-local} plus a coupling term given by
$$
\frac{1}{2\varepsilon^{N_2}}\int_{R_{\varepsilon}}\int_{\Omega}G(x-y)\left(v(x)-u(y)\right)^2 dy dx.
$$

Now, let us consider the evolution problem obtained as the gradient flow associated with this energy with respect to the norm previously defined in
\eqref{different-metric}, that is,
$(u (t),v(t))$ will be the solution of the abstract $ODE$ problem $$(u,v)'(t)=-\partial E_\varepsilon \left[(u,v)(t)\right], \qquad t\geq 0,$$ 
with initial data $u(x,0)=u_{0}(x)$, $v(x,0) =v_0^{\varepsilon}(x)$. Here  $\partial E\left[(u,v)\right]$ denotes the subdifferential of $E$ at the point $(u,v)$. 
To see what kind of equations we are solving here, let us compute the derivative of $E$ at $(u,v)$ in the direction of $\varphi \in C_{0}^{\infty}(\Omega_{\varepsilon})$,
\begin{align*}
 \displaystyle 
 \partial_{\varphi}E_\varepsilon (u,v)  & \displaystyle =\lim_{h \to 0}\frac{E_\varepsilon (u+h\varphi,v+h\varphi)-E_\varepsilon (u,v)}{h} \\
 \displaystyle  & = \int_{\Omega} \nabla u \nabla \varphi dx + \frac{1}{\varepsilon^{N_2}}\int_{R_{\varepsilon}}\frac{1}{2 \varepsilon^{N_2}}\int_{R_{\varepsilon}}J(x-y)(v(y)-v(x))(\varphi(y)-\varphi(x))dydx  \\
 \displaystyle  & \qquad + \frac{1}{\varepsilon^{N_2}}\int_{R_{\varepsilon}}\int_{\Omega}G(x-y)(v(x)-u(y))dy \varphi(x)dx -  \int_{\Omega}\frac{1}{\varepsilon^{N_2}}\int_{R_{\varepsilon}}G(x-y)(v(y)-u(x))dy \varphi(x)dx.
\end{align*}
Since $\langle \partial E[u,v],\varphi \rangle=\partial_{\varphi}E(u,v)$, we can derive the local/nonlocal problem associated to this gradient flow
that is given by the following system of equations:
\begin{align}\label{eps-case1}
\begin{cases}
 \displaystyle   \frac{\partial u}{\partial t}(x,t)  = \bigtriangleup u(x,t) + \frac{1}{\varepsilon^{N_2}}\int_{R_{\varepsilon}}G(x-y)(v(y,t)-u(x,t))dy, 
 \quad (x,t) \in \Omega\times (0,+\infty),  \\[10pt]
 \displaystyle   \frac{\partial u}{\partial \eta}(x,t)  =  0, \quad (x,t) \in \partial \Omega\times (0,+\infty),  \\[10pt]
 \displaystyle \frac{\partial v}{\partial t}(x,t)=\frac{1}{\varepsilon^{N_2}}\int_{R_{\varepsilon}} J(x-y)\left(v(y,t)-v(x,t) \right)dy 
   - \int_{\Omega}G(x-y)(v(x,t)-  u(y,t))dy, \ (x,t) \in R_{\varepsilon} \times (0,+\infty),  \\[10pt]
 \displaystyle    u(x,0)  =u_{0}(x), \quad x \in \Omega, \\[5pt]
 v(x,0)  =v_{0}^{\varepsilon}(x), \quad x \in R_{\varepsilon}.
    \end{cases}
    \end{align}
    
    As we have mentioned previously, our aim is to pass to the limit as $\varepsilon \to 0$ in this evolution problem
    \eqref{eps-case1}.
To introduce a candidate to be a limit problem for \eqref{eps-case1}, defined in the domain $\Omega_{0}$ (see Figure \ref{fig2}) 
we will perform a change of variables (as described in \cite{pereira-rossi}) in the thin domain, $R_{\varepsilon}$, in order to fix it. 
The change of variables is given by
$$
R = R_{1} \times R_{2} \ni (x_1,x_2)\longmapsto (x_1, \varepsilon x_2) \in R_1 \times \varepsilon R_2 = R_{\varepsilon}.
$$
That is, we take $\tilde{x}_{2}=\frac{x_2}{\varepsilon}$ and $\tilde{y}_{2}=\frac{y_2}{\varepsilon}$.
With these variables we can fix the domain which allows us to analyze the asymptotic behavior as $\varepsilon \to 0$ in a fixed space of functions. 
To fix the initial condition for $v$ after the change of variables, we take $v(x,0)=v_{0}^{\varepsilon}(x_1,x_2) = v_0 (x_1, \tilde{x}_2)$ for some fixed function $v_0$.
The problem \eqref{eps-case1} becomes after this change of variables the following equations in the fixed domain $\widehat{\Omega} = \Omega \cup R$:
\begin{align}\label{neumann-fixed-domain}
\begin{cases}
 \displaystyle   \frac{\partial u^{\varepsilon}}{\partial t}(x,t)  = \bigtriangleup u^{\varepsilon}(x,t) + \int_{R}G_{\varepsilon}(x-y)(v^{\varepsilon}(\tilde{y},t)-u(x,t))d\tilde{y}, \quad (x,t) \in \Omega \times (0,+\infty),  \\[10pt]
 \displaystyle   \frac{\partial u^{\varepsilon}}{\partial \eta}(x,t)  =  0, \quad (x,t) \in \partial \Omega\times (0,+\infty),  \\[10pt]
 \displaystyle \frac{\partial v^{\varepsilon}}{\partial t}(\tilde{x},t)=\int_{R} J_{\varepsilon}(x-y)\left(v^{\varepsilon}(\tilde{y},t)-v^{\varepsilon}(\tilde{x},t) \right)d\tilde{y} 
   - \int_{\Omega}G_{\varepsilon}(x-y)(v^{\varepsilon}(\tilde{x},t)-  u^{\varepsilon}(y,t))dy, \ ( \tilde{x},t) \in R \times (0,+\infty),  \\[10pt]
 \displaystyle    u^\varepsilon (x,0)  =u_{0}(x), \quad x \in \Omega, \\[5pt]
 v^\varepsilon (\tilde{x},0)  =v_{0}(\tilde{x}), \quad \tilde{x} \in R,
    \end{cases}
    \end{align}
where $$J_{\varepsilon}(x-y)=J(x_1-y_1,\varepsilon(\tilde{x}_2-\tilde{y}_2)),  \quad G_{\varepsilon}(x-y)=G(x_1-y_1,\varepsilon \tilde{x}_2-\tilde{y}_2)
\quad \mbox{and} \quad v^{\varepsilon}(x_1,\tilde{x}_2,t)=v(x_1,\varepsilon \tilde{x}_2,t).$$
Notice that the problem \eqref{neumann-fixed-domain} is similar to the ones obtained previously in thin domains (see for instance \cite{pereira-rossi,arrieta1}). 

Now, we are ready to state our main result for this coupling.

\begin{theorem}\label{teo1}
Let $\{ (u^{\varepsilon},v^{\varepsilon}) \}_{\varepsilon >0}$ be a family of solutions of \eqref{neumann-fixed-domain}. Then, there exist $(u^{*},V^{*})$, $u^{*} \in C([0,T],H^{1}(\Omega))$ and $V^{*} \in C([0,T],L^{2}(R_1))$, such that 
$$
\begin{array}{l}
\displaystyle
u^{\varepsilon} \rightharpoonup u^{*} \quad \mbox{in} \quad L^{\infty} (0,T;L^2(\Omega)), \\
\displaystyle v^{\varepsilon} \rightharpoonup v^{*} \quad \mbox{in} \quad L^{\infty} (0,T;L^2(R)) \quad \mbox{and}, \\
\displaystyle V^{\varepsilon}(\cdot) = \int_{R_2}v^{\varepsilon}(\cdot,\varepsilon \tilde{x}_2,t)d\tilde{x}_2 \rightharpoonup 
V^{*}(\cdot) = \int_{R_2}v^{*}(\cdot,0,t)d\tilde{x}_2 \quad \mbox{in} \quad L^{\infty} (0,T;L^2(R_1)).
\end{array}
$$
The pair $\{u^{*},V^{*}\}$ satisfies the following limit problem in $\Omega_0 = \Omega \cup R_1$,
 \begin{align}\label{limit-problem-neumann}
\begin{cases}
 \displaystyle   \frac{\partial u^{*}}{\partial t}(x,t)  =  \Delta u^{*} (x,t)+\int_{R_1} G^{*}(x-y)(V^{*}(y,t)-|R_2|u^{*}(x,t))dy, 
 \quad (x,t) \in \Omega \times (0,+\infty),\\[10pt]
 \displaystyle   \frac{\partial u^{*}}{\partial \eta}(x,t)  =  0, \quad (x,t) \in \partial \Omega \times (0,+\infty),  \\[10pt]
 \displaystyle \frac{\partial V^{*}}{\partial t}(x,t)  = |R_2|\int_{R_1} J^{*}(x-y)\left(V^{*}(y,t)-V^{*}(x,t) \right)dy 
   - \int_\Omega G^{*}(x-y)(V^{*}(x,t)- |R_2| u^{*}(y,t)) dy,  \\[10pt]
   \displaystyle \qquad\qquad\qquad\qquad\qquad\qquad\qquad\qquad\quad (x,t) \in R_1 \times (0,+\infty)  \\[10pt]
     u^{*}(x,0)  =u_{0} (x), \quad x \in \Omega,\\[5pt]
 \displaystyle    V^{*}(x,0)=V_{0}^{*}(x)= \int_{R_2}v_0(x,0)dx_2, \quad x \in R_1,
    \end{cases}
    \end{align}  
where the limit kernels $J^{*}$ and $G^{*}$ are given by
$$
J^{*}(x-y) = J(x_1-y_1,0), \qquad
G^{*}(x-y) = G(x_1-y_1,0-y_2).
$$
\end{theorem}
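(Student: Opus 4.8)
The plan is to argue by weak compactness in the fixed domain $\widehat\Omega=\Omega\cup R$: derive $\varepsilon$-uniform a priori bounds, extract weakly convergent subsequences, and pass to the limit in the weak (variational) formulation of \eqref{neumann-fixed-domain}. The key structural observation is that the weight $\varepsilon^{-N_2}$ in \eqref{different-metric} is exactly the Jacobian of the change of variables $x_2=\varepsilon\tilde x_2$, so in the tilde variables the norm on the thin part becomes the ordinary $L^2(R)$ norm and $E_\varepsilon$ is a nonnegative quadratic form whose coupling and nonlocal coefficients stay controlled as $\varepsilon\to0$. For the estimates, since $(u^\varepsilon,v^\varepsilon)$ is the gradient flow of $E_\varepsilon$, the weighted $L^2$ norm is nonincreasing, $\tfrac{d}{dt}\tfrac12\|(u^\varepsilon,v^\varepsilon)\|^2=-2E_\varepsilon(u^\varepsilon,v^\varepsilon)\le0$, which yields the uniform $L^\infty(0,T;L^2(\Omega))$ bound for $u^\varepsilon$ and the $L^\infty(0,T;L^2(R))$ bound for $v^\varepsilon$; integrating in time, $\int_0^T E_\varepsilon(u^\varepsilon,v^\varepsilon)\,dt$ is bounded by the (uniformly bounded) initial energy, controlling $\int_0^T\|\nabla u^\varepsilon\|_{L^2(\Omega)}^2$ together with the nonlocal and coupling quadratic forms. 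The equations themselves then bound $\partial_t u^\varepsilon$ and $\partial_t v^\varepsilon$ in suitable spaces, providing equicontinuity in time.

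Second, by Banach--Alaoglu I would pass to a subsequence with $u^\varepsilon\rightharpoonup u^*$ weakly-$*$ in $L^\infty(0,T;L^2(\Omega))$ and weakly in $L^2(0,T;H^1(\Omega))$, with $v^\varepsilon\rightharpoonup v^*$ weakly-$*$ in $L^\infty(0,T;L^2(R))$, and with the thin-direction average $V^\varepsilon\rightharpoonup V^*$ in $L^\infty(0,T;L^2(R_1))$. The time-derivative bounds, through an Aubin--Lions type argument, upgrade these to the stated continuity $u^*\in C([0,T];H^1(\Omega))$ and $V^*\in C([0,T];L^2(R_1))$ and allow me to match the initial conditions.

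Third, and this is the heart of the argument, I would pass to the limit term by term in the weak formulation. The decisive facts are: (i) by continuity of $J$ and $G$ the rescaled kernels converge uniformly, $J_\varepsilon\to J^*$ and $G_\varepsilon\to G^*$, where only the second component lying in the collapsing domain $R$ is sent to $0$ (this asymmetry is what produces the limit $G^*(x-y)=G(x_1-y_1,-y_2)$); and (ii) the nonlocal operators have kernels that, restricted to the bounded sections, are square integrable, so the convolution of a fixed test function against $J^*$ or $G^*$ is again an admissible test function. Consequently each nonlocal and coupling term splits as $\iint(J_\varepsilon-J^*)v^\varepsilon\varphi+\iint J^*v^\varepsilon\varphi$, whose first piece vanishes by uniform kernel convergence together with $L^2$-boundedness of $v^\varepsilon$, and whose second piece passes by the weak convergence of $v^\varepsilon$; the local term passes by weak $H^1$ convergence of $\nabla u^\varepsilon$. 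Integrating the $v^\varepsilon$-equation over the thin section $R_2$ and passing to the limit closes the system in $(u^*,V^*)$: since $J^*$ and the surviving part of $G^*$ no longer depend on the thin variables, the $\int_{R_2}$ integrations generate precisely the factors $|R_2|$ appearing in \eqref{limit-problem-neumann}.

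The step I expect to be the main obstacle is the dimensional collapse in the nonlocal equation. There is no diffusion in the $x_2$-direction to furnish compactness there, so controlling the behaviour of $v^\varepsilon$ across the thin section, enough to identify $V^*$ with the averaged trace $\int_{R_2}v^*(\cdot,0,t)\,d\tilde x_2$ and to guarantee that $V^*$ solves a closed, well-posed reduced problem, must be extracted from the uniform energy bounds and the smoothing of the integrable nonlocal kernels rather than from any gradient estimate. Finally, since the limit problem \eqref{limit-problem-neumann} is itself well posed, with existence and uniqueness obtained as in \cite{GQR,santos2020}, its solution is unique, and this promotes the convergence from a subsequence to the entire family $\varepsilon\to0$, as claimed.
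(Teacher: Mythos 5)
Your proposal is correct and, at its core, it is the same proof as the paper's: extract weak-$*$ limits, pass to the limit in the weak formulation by splitting each kernel term as $(K_\varepsilon-K^*)+K^*$ (the first piece killed by uniform convergence of the kernels together with $L^2$-boundedness, the second by weak convergence), exploit that $J^*$ and $G^*$ do not depend on the collapsing variables so that the $R_2$-integrations produce $V^\varepsilon$ and the factors $|R_2|$, and invoke uniqueness of the limit problem to upgrade subsequential convergence to convergence of the whole family. You differ in two supporting steps, and the comparison is instructive. (i) You obtain the a priori bounds directly from the gradient-flow structure, $\frac{d}{dt}\frac{1}{2}\|(u^\varepsilon,v^\varepsilon)\|^2=-2E_\varepsilon\le 0$ in the metric \eqref{different-metric}, whereas the paper routes boundedness through Lemma \ref{positive-constant} (a uniform spectral gap proved by a compactness/contradiction argument) and the exponential decay \eqref{decay.exp}. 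Your derivation is more elementary and decouples the compactness step from the spectral gap, which the paper really needs only for the decay estimates; just note that the time-integrated bound $\int_0^T E_\varepsilon\,dt\le C$ alone gives only $L^2(0,T;H^1(\Omega))$ control, so for the claimed $L^\infty(0,T;H^1(\Omega))$ bound and the strong $L^\infty(0,T;L^2(\Omega))$ convergence you should also use the monotonicity of the energy along trajectories, $E_\varepsilon(u^\varepsilon(t),v^\varepsilon(t))\le E_\varepsilon(u_0,v_0)\le C$ uniformly in $t$ and $\varepsilon$. (ii) For uniqueness of the limit problem you cite \cite{GQR} and \cite{santos2020}; this is the one genuine soft spot, because those references treat couplings of regions of the \emph{same} dimension and do not literally cover system \eqref{limit-problem-neumann}, where a full-dimensional local equation is coupled to a lower-dimensional nonlocal one. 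The paper proves uniqueness directly, and the argument fits your framework in two lines: if $(w^*,z^*)$ is the difference of two solutions, then $f(t)=\frac{|R_2|}{2}\int_\Omega (w^*)^2+\frac12\int_{R_1}(z^*)^2$ satisfies $f'(t)=-2E(w^*,z^*)\le 0$ with $f(0)=0$ and $f\ge 0$, hence $f\equiv 0$. With that replacement instead of the citation, your proof is complete.
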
 

We also include here some properties of the limit problem \eqref{limit-problem-neumann}. 
The problem is well posed, the total mass remains constant in time, that is,
\begin{equation}\label{mass.intro}
\int_{\Omega}u^{*}(x,t)dx+\int_{R_1}V^{*}(x,t)dx = \int_{\Omega}u^{*}_0 dx+\int_{R_1}V^{*}_0 dx, \quad \forall t \geq 0,
\end{equation} 
and solutions converge exponentially to the mean value of the initial condition as $t\to \infty$, i.e.,
$$
\Big\| (u^{*},V^*)(\cdot,t)-\fint (u_{0}^{*},V^*_0) \Big\|_{L^2({\Omega}_0)} \leq C 
 e ^{- \lambda_{1} t}.
$$
for some $C>0$ and $\lambda_1>0$ (we also obtain that $\lambda_1$ can be chosen independent of the initial data).

\subsection{Coupling at the boundary}\label{neumamm-case}

Now we want to impose that an individual to pass from the nonlocal domain to the local domain, it necessarily needs to cross the boundary to then get in the local domain.  
As we did before, first we will define the problem in the perturbed domain (see Figure \ref{fig1}) and then 
derive the limit problem defined in the limit domain (see Figure \ref{fig2}). 
Let us, as before, consider the domain $\Omega_{\varepsilon}= \Omega \cup R_{\varepsilon} \subset \mathbb{R}^{N}$, with $\Omega \subset \mathbb{R}^{N}$ and $R_{\varepsilon} = R_1 \times \varepsilon R_2 \subset \mathbb{R}^{N}$, with a small parameter $\varepsilon$.

Let us consider the metric \eqref{different-metric} and derive the evolution problem as the flux associated with the energy
\begin{equation} \label{energy2}
    E^b_\varepsilon (u,v)  \displaystyle:=\frac{1}{2}\int_{\Omega} |\nabla  u |^2 dx 
    + \frac{1}{4\varepsilon ^{2 N_{2}}}\int_{R_\varepsilon}\int_{R_\varepsilon}J(x-y)\left(v(y)-v(x)\right)^2 dydx 
    \displaystyle  + \frac{1}{2\varepsilon^{N_2}}\int_{R_{\varepsilon}}\int_{\Gamma}G(x-y)\left(v(x)-u(y)\right)^2 d\sigma(y) dx,
\end{equation}
with $\Gamma$ a fixed part of the boundary of $\Omega$ with $|\Gamma|_{N-1}>0$ (then we have a well defined trace
operator from $H^1(\Omega)$ into $L^2(\Gamma))$. 
Notice that the coupling term 
$$
\frac{1}{2\varepsilon^{N_2}}\int_{R_{\varepsilon}}\int_{\Gamma}G(x-y)\left(v(x)-u(y)\right)^2 d\sigma(y) dx
$$
involve the values of $u$ on $\Gamma \subset \partial \Omega$ instead of the values of $u$ inside $\Omega$ (compare with 
the previous functional $E_\varepsilon (u,v)$).

Now, the evolution problem associated to the energy functional $E^b_\varepsilon (u,v)$ is given by the following system:
\begin{align}\label{problem-energy2}
\begin{cases}
 \displaystyle   \frac{\partial u}{\partial t}(x,t)  =  \Delta u (x,t), \quad (x,t) \in \Omega \times (0,+\infty),\\[10pt]
 \displaystyle   \frac{\partial u}{\partial \eta}(x,t)  =  0, \quad (x,t) \in \partial \Omega \setminus \Gamma \times (0,+\infty),  \\[10pt]
 \displaystyle     \frac{\partial u}{\partial \eta}(x,t)  = \frac{1}{\varepsilon^ {N_2}}\int_{R_{\varepsilon}} G(x-y)( v(y,t)-  u(x,t))d\sigma(y), \quad (x,t) \in  \Gamma \times  (0,+\infty), \\[10pt] 
 \displaystyle \frac{\partial v}{\partial t}(x,t)  = \frac{1}{\varepsilon^{N_2}}\int_{R_{\varepsilon}} J(x-y)\left(v(y,t)-v(x,t) \right)dy 
   - \int_\Gamma G(x-y)   ( v(x,t)-  u(y,t)) d\sigma(y), \ (x,t) \in R_{\varepsilon} \times (0,+\infty)  \\[10pt]
     u(x,0)  =u_{0}(x), \quad x \in \Omega,\\[5pt]
 \displaystyle    v(x,0)  =v_{0}^{\varepsilon}(x), \quad x \in R_{\varepsilon}.
    \end{cases}
    \end{align}
Notice that the nonlocal part contributes with the normal derivative of $u$ on $\Gamma$ and the local part of the problem appears as before 
in the source term of the equation for the nonlocal part. The coupling is balanced in such a way that the problem preserves the total mass,
see \cite{santos2020}.

After the same change of variables that we used before, 
$\tilde{x}_{2}=\frac{x_2}{\varepsilon}$ and $\tilde{y}_{2}=\frac{y_2}{\varepsilon}$,
we fix the domain and then pass to the limit and obtain the limit problem.
Again here we take $v(x,0)=v_{0}^{\varepsilon}(x_1,x_2) = v_0 (x_1, \tilde{x}_2)$ for some fixed function $v_0$ as the initial condition.
Notice that, as we did in the previous subsection, there exists an equivalence between the coupled local/nonlocal problem \eqref{problem-energy2} with the following coupled local/nonlocal thin domain problem defined in $\hat{\Omega} = \Omega \cup R$, with $\Gamma$ a fixed 
part of the boundary of $\Omega$,
\begin{align}\label{problemrescaled-energy2}
\begin{cases}
 \displaystyle   \frac{\partial u^{\varepsilon}}{\partial t}(x,t)  =  \Delta u^{\varepsilon} (x,t), \quad (x,t) \in \Omega \times (0,+\infty),\\[10pt]
 \displaystyle   \frac{\partial u^{\varepsilon}}{\partial \eta}(x,t)  =  0, \quad (x,t) \in \partial \Omega \setminus \Gamma \times (0,+\infty),  \\[10pt]
 \displaystyle     \frac{\partial u^{\varepsilon}}{\partial \eta}(x,t)  = \int_{R} G^{\varepsilon}(x-y)( v^{\varepsilon}(\tilde{y},t)-  u^{\varepsilon}(x,t))d\tilde{y}, \quad (x,t) \in  \Gamma \times  (0,+\infty), \\[10pt] 
 \displaystyle \frac{\partial v^{\varepsilon}}{\partial t}(\tilde{x},t)  = \int_{R} J^{\varepsilon}(x-y)\left(v^{\varepsilon}(\tilde{y},t)-v^{\varepsilon}(\tilde{x},t) \right)d\tilde{y} 
   - \int_\Gamma G^{\varepsilon}(x-y)( v^{\varepsilon}(\tilde{x},t)-  u^{\varepsilon}(y,t)) d\sigma (\tilde{y}), \ (\tilde{x},t) \in R \times (0,+\infty)  \\[10pt]
     u^{\varepsilon}(x,0)  =u_{0}(x), \quad x \in \Omega,\\[10pt]
 \displaystyle    v^{\varepsilon}(\tilde{x},0)  =v_{0}(\tilde{x}), \quad \tilde{x} \in R,
    \end{cases}
    \end{align}  
with
$$
J^{\varepsilon}(x-y) = J(x_1-y_1,\varepsilon(\tilde{x}_2-\tilde{y}_2)) \quad 
G^{\varepsilon}(x-y) = G(x_1-y_1,x_2-\varepsilon \tilde{y}_2) \quad \mbox{and} \quad
v^{\varepsilon}(\tilde{x},t)=v(x_1,\varepsilon \tilde{x}_2,t).
$$

Now we can enunciate a convergence result analogous to Theorem \ref{teo1}. It says that there is a limit as $\varepsilon \to 0$ of the solutions to the problem \eqref{problemrescaled-energy2} in the limit domain $\Omega_{0} = \Omega \cup R_{1}$ (see Figure \ref{fig2}).

\begin{theorem}\label{teo2}
Let $\{u^{\varepsilon},v^{\varepsilon}\}_{\varepsilon >0}$ be a family of solutions for the problem \eqref{problemrescaled-energy2}. Then, there exists a solution $(u^{*},V^{*})$, $u^{*} \in C([0,T],H^{1}(\Omega))$ and $V^{*} \in C([0,T],L^{2}(R_1))$, such that 
$$
\begin{array}{l}
\displaystyle
u^{\varepsilon} \rightharpoonup u^{*} \quad \mbox{in} \quad L^{\infty} (0,T;L^2(\Omega)), \\
\displaystyle v^{\varepsilon} \rightharpoonup v^{*} \quad \mbox{in} \quad L^{\infty} (0,T;L^2(R)) \quad \mbox{and}, \\
\displaystyle V^{\varepsilon}(\cdot) = \int_{R_2}v^{\varepsilon}(\cdot,\varepsilon \tilde{x}_2,t)d\tilde{x}_2 \rightharpoonup 
V^{*}(\cdot) = \int_{R_2}v^{*}(\cdot,0,t)d\tilde{x}_2 \quad \mbox{in} \quad L^{\infty} (0,T;L^2(R_1)).
\end{array}
$$
The pair $\{u^{*},V^{*}\}$ satisfies the following limit problem in $\Omega_0=\Omega \times R_1$,
 \begin{align}\label{limit-prob-neumann/robin}
\begin{cases}
 \displaystyle   \frac{\partial u^{*}}{\partial t}(x,t)  =  \Delta u^{*} (x,t), \quad (x,t) \in \Omega \times (0,+\infty),\\[10pt]
 \displaystyle   \frac{\partial u^{*}}{\partial \eta}(x,t)  =  0, \quad (x,t) \in \partial \Omega \setminus \Gamma \times (0,+\infty),  \\[10pt]
 \displaystyle     \frac{\partial u^{*}}{\partial \eta}(x,t)  = \int_{R_1} G^{*}(x-y)(V^{*}(y,t)-|R_2|u^{*}(x,t))dy, \quad (x,t) \in  \Gamma \times  (0,+\infty), \\[10pt] 
 \displaystyle \frac{\partial V^{*}}{\partial t}(x,t)  = |R_2|\int_{R_1} J^{*}(x-y)\left(V^{*}(y,t)-V^{*}(x,t) \right)dy 
   - \int_\Gamma G^{*}(x-y)(V^{*}(x,t)- |R_2| u^{*}(y,t)) d\sigma (y), \\[10pt]
   \qquad \qquad \qquad \qquad \qquad \qquad \qquad \quad (x,t) \in R_1 \times (0,+\infty)  \\[10pt]
     u^{*}(x,0)  =u_{0}^{*}(x), \quad x \in \Omega,\\[5pt]
 \displaystyle    V^{*}(x,0)=V_{0}^{*}(x)= \int_{R_2}v_0(x,0)dx_2, \quad x \in R_1,
    \end{cases}
    \end{align}  
where the limit kernels $J^{*}$ and $G^{*}$ are given by
$$
J^{*}(x-y) = J(x_1-y_1,0), \quad
G^{*}(x-y) = G(x_1-y_1,x_2-0).
$$
\end{theorem}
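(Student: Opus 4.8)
The plan is to follow the same scheme as the proof of Theorem \ref{teo1}, the only genuinely new difficulty being that the local and nonlocal parts now interact through a surface integral on $\Gamma$ rather than through an interior source term. I would organize the argument around three blocks: uniform a priori estimates, compactness, and passage to the limit in the weak formulation. First I would exploit the gradient flow structure: since $(u^{\varepsilon},v^{\varepsilon})$ solves the flow generated by $E^b_\varepsilon$ in the metric \eqref{different-metric}, the map $t \mapsto E^b_\varepsilon(u^{\varepsilon}(t),v^{\varepsilon}(t))$ is non-increasing, and with the prepared data $v_0^{\varepsilon}(x_1,x_2)=v_0(x_1,x_2/\varepsilon)$ the change of variables $\tilde{x}_2=x_2/\varepsilon$ converts every $\varepsilon^{-N_2}$ weight in \eqref{different-metric} and \eqref{energy2} into the Jacobian $\varepsilon^{N_2}$, so that $E^b_\varepsilon(u_0,v_0^{\varepsilon})$ is bounded uniformly in $\varepsilon$. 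This yields, uniformly in $\varepsilon$ and $t\in[0,T]$, a bound on $\|\nabla u^{\varepsilon}\|_{L^2(\Omega)}$, on the rescaled nonlocal Dirichlet form for $v^{\varepsilon}$, and on the coupling term over $\Gamma$. Combining these with conservation of total mass gives uniform $L^{\infty}(0,T;L^2(\Omega))$ and $L^{\infty}(0,T;L^2(R))$ bounds for $u^{\varepsilon}$ and $v^{\varepsilon}$, hence a uniform bound for $V^{\varepsilon}$ in $L^{\infty}(0,T;L^2(R_1))$; testing the equations also produces uniform bounds on the time derivatives in suitable dual spaces.

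Next I would extract limits. The $H^1(\Omega)$ bound together with the bound on $\partial_t u^{\varepsilon}$ gives, via Aubin--Lions, strong convergence $u^{\varepsilon}\to u^{*}$ in $C([0,T];L^2(\Omega))$ and, crucially, strong convergence of the traces $u^{\varepsilon}|_{\Gamma}\to u^{*}|_{\Gamma}$ in $L^2(\Gamma\times(0,T))$ by continuity of the trace operator on $H^1(\Omega)$. The remaining bounds give, along a subsequence, the weak-$*$ limits $u^{\varepsilon}\rightharpoonup u^{*}$, $v^{\varepsilon}\rightharpoonup v^{*}$ and $V^{\varepsilon}\rightharpoonup V^{*}$ announced in the statement. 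I would then record the elementary but decisive fact that $J^{\varepsilon}\to J^{*}$ and $G^{\varepsilon}\to G^{*}$ uniformly on compact sets as $\varepsilon\to 0$ by continuity of $J$ and $G$, the point being that both limit kernels lose their dependence on the thin variable, since $\varepsilon\tilde{x}_2,\varepsilon\tilde{y}_2\to 0$.

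Finally I would pass to the limit in the weak formulation of \eqref{problemrescaled-energy2}. Testing the local equation against $\varphi\in C^{\infty}(\overline{\Omega})$ and the nonlocal equation against functions $\psi=\psi(\tilde{x}_1)$ that are independent of the thin variable, every term is a product of a uniformly (strongly) convergent kernel with a weakly convergent density, so it passes to the limit. Because the limiting kernels are independent of $\tilde{x}_2$, the thin-variable integrations factor out: the inner integrals $\int_{R_2}v^{\varepsilon}\,d\tilde{x}_2$ and $\int_{R_2}v^{\varepsilon}\,d\tilde{y}_2$ collapse to $V^{*}$, while the spectator factors $\int_{R_2}d\tilde{x}_2$ produce the constants $|R_2|$ appearing in \eqref{limit-prob-neumann/robin}, and the surface coupling term is treated using the strong trace convergence of $u^{\varepsilon}$ on $\Gamma$. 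This identifies $(u^{*},V^{*})$ as a weak solution of the limit problem; since that problem is well posed, with uniqueness following from the same monotonicity and energy argument used for the $\varepsilon$-problems, the limit is independent of the subsequence and the whole family converges.

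I expect the main obstacle to be precisely the surface coupling. In contrast with Theorem \ref{teo1}, where the coupling is an interior source term controlled directly by the $L^2(\Omega)$ convergence of $u^{\varepsilon}$, here one must pass to the limit in $\int_{\Gamma}G^{\varepsilon}(x-y)\bigl(v^{\varepsilon}(\tilde{y})-u^{\varepsilon}(x)\bigr)\,d\tilde{y}$, which forces genuine control of the trace of $u^{\varepsilon}$ on $\Gamma$ uniformly in $\varepsilon$ together with its strong convergence there. Securing this trace compactness compatibly with the $\varepsilon^{-N_2}$ scaling of the coupling energy in \eqref{energy2}, and verifying that the $\tilde{x}_2$-averaging commutes with the now $\tilde{x}_2$-independent limit kernel $G^{*}$, are the delicate points of the argument.
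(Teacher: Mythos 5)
Your proposal is correct and follows essentially the same route as the paper's proof: uniform bounds, extraction of weak limits ($u^{\varepsilon}$ weakly in $L^{\infty}(0,T;H^1(\Omega))$, $v^{\varepsilon}$ and $V^{\varepsilon}$ weakly in $L^{\infty}$-$L^2$), passage to the limit in the weak formulation with test functions independent of the thin variable using the uniform convergence $J^{\varepsilon}\to J^{*}$, $G^{\varepsilon}\to G^{*}$, the trace operator to handle the surface coupling on $\Gamma$, and uniqueness of the limit problem via the energy \eqref{energy*} to upgrade subsequential to full convergence. The only slip is that in your compactness step the strong convergence of $u^{\varepsilon}|_{\Gamma}$ in $L^2(\Gamma)$ requires \emph{compactness} of the trace operator $H^1(\Omega)\to L^2(\Gamma)$ (as the paper invokes), not mere continuity as you first state, though you identify exactly this trace compactness as the delicate point in your closing paragraph.
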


For this limit problem we also have that it is well posed, the total mass remains constant in time
and solutions converge exponentially to the mean value of the initial condition as $t\to \infty$.

\subsection{The local part in a thin domain}

We can also consider the case in which the local part of the problem takes place in a thin domain (fixing the nonlocal 
domain). That is, we consider $\Omega_1 \subset \mathbb{R}^{N_1}$, $\Omega_2 \subset \mathbb{R}^{N_2}$, and
can take $\Omega_\varepsilon = \Omega_1 \times \varepsilon \Omega_2 \cup R$ as our reference domain. 
In this case the associated energy takes the form 
\begin{equation}\label{neumann-energy.intro.89}
\begin{array}{l}
    E(u,v)  \displaystyle:=\frac{1}{2\varepsilon^{N_2}}\int_{\Omega_1\times \varepsilon \Omega_2} |\nabla  u |^2 dx 
    + \frac{1}{4}\int_{R}\int_{R}J(x-y)\left(v(y)-v(x)\right)^2 dydx  \\[10pt]
    \displaystyle  
    \qquad \qquad + \frac{1}{2\varepsilon^{N_2}}\int_{R}\int_{\Omega_1\times \varepsilon \Omega_2}G(x-y)\left(v(x)-u(y)\right)^2 dy dx.
\end{array}
\end{equation}
We can also consider the limit as $\varepsilon \to 0$ of solutions to the associated gradient flow in this case. 
In this case
we obtain a limit problem in which the equation for $u$ involves only the Laplacian in the first $N_1$-variables 
and the coupling kernel is given by
$$
G^*(x-y) = G(x_1-y_1,0-y_2)
$$
(the kernel $J$ remains unchanged since we are fixing the nonlocal domain $R$). 
The proof of this limit can be obtained following \cite{arrieta1,arrieta2,arrieta3} 
(notice that here we are taking the limit in the local part of the problem) and hence we
don't include the details in this paper.

\medskip

The paper is organized as follows: in Section \ref{NP} we deal with the problem
with coupling via source terms and we prove Theorem \ref{teo1}; in Section \ref{Sect-3}
we consider the coupling on the boundary and prove of Theorem \ref{teo2}; finally, in Section
\ref{sect-numer} we include some numerical experiments (based on a discretization of our
models) that illustrate the behaviour of the solutions to our limit equations.

\section{Coupling via source terms. Proof of Theorem \ref{teo1}}\label{NP}

First, we introduce a result that will be important to study the large time behavior and the limit problems described in the previous Section. 
We state the lemma for the first problem (coupling via source terms) but the same proof can be adapted for the other evolution problem
(coupling on the boundary).

Let us denote by $\hat{\Omega} = \Omega \cup R$ the fixed domain after the change of variables and by $E$ the functional \eqref{neumann-energy.44} after the change of variables, that is, 
\begin{equation}\label{neumann-energy.88}
    E (u,v)  \displaystyle:=\frac{1}{2}\int_{\Omega} |\nabla  u |^2 dx 
    + \frac{1}{4}\int_{R}\int_{R}J_\varepsilon (x-y) |v(y)-v(x)|^2 dydx  + \frac{1}{2}\int_{R}\int_{\Omega}G_\varepsilon (x-y)|v(x)-u(y)|^2 dy dx,
\end{equation}
with
$$
J_{\varepsilon}(x-y) = J(x_1-y_1,\varepsilon(\tilde{x}_2-\tilde{y}_2)),
\qquad
G_{\varepsilon}(x-y) = G(x_1-y_1,\varepsilon\tilde{x}_2-\tilde{y}_2).
$$

\begin{lemma}\label{positive-constant}
Let $\{\lambda_{1}^{\varepsilon}\}_{\varepsilon >0}$ be a family of first nontrivial eigenvalues of our evolution problem that are given by
$$
\lambda_{1}^{\varepsilon} = \inf_{(u,v): \int_{\Omega}u^{\varepsilon}+\int_{R}v^{\varepsilon}=0} \displaystyle 
\frac{E(u^{\varepsilon},v^{\varepsilon})}{\displaystyle \int_\Omega (u^{\varepsilon})^{2}+\int_{R}(v^{\varepsilon})^2}
$$
Then, there exists a constant $C>0$, that does not depends on $\varepsilon$ such that
$$
\lambda_{1}^{\varepsilon}\geq C >0,
$$
and hence we have,
\begin{equation}\label{ineq1}
E(u^{\varepsilon},v^{\varepsilon}) \geq C \left(\int_{\Omega}(u^{\varepsilon})^{2}+\int_{R}(v^{\varepsilon})^2\right),
\end{equation}
for every $(u^{\varepsilon},v^{\varepsilon})$ solution to \eqref{neumann-fixed-domain}, 
such that $\int_{\Omega}u^{\varepsilon}+\int_{R}v^{\varepsilon}=0$.
\end{lemma}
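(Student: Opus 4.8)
The plan is to prove the equivalent coercivity statement \eqref{ineq1} directly, by estimating from below each of the three nonnegative pieces of the functional \eqref{neumann-energy.88},
\[
T_1=\int_\Omega|\nabla u|^2\,dx,\quad T_2=\int_R\int_R J_\varepsilon(x-y)|v(y)-v(x)|^2\,dy\,dx,\quad T_3=\int_R\int_\Omega G_\varepsilon(x-y)|v(x)-u(y)|^2\,dy\,dx,
\]
by quantities that are \emph{uniform} in $\varepsilon$, and then combining them. Writing $\bar u=|\Omega|^{-1}\int_\Omega u$ and $\bar v=|R|^{-1}\int_R v$, the admissibility constraint reads $|\Omega|\bar u+|R|\bar v=0$, and the Pythagorean identity $\int_\Omega u^2+\int_R v^2=\|u-\bar u\|_{L^2(\Omega)}^2+\|v-\bar v\|_{L^2(R)}^2+|\Omega|\bar u^2+|R|\bar v^2$ shows that it suffices to control the oscillations $\|u-\bar u\|^2$, $\|v-\bar v\|^2$ and the two averages by $E$, with constants not depending on $\varepsilon$.

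The step I expect to be the main obstacle is a \emph{nonlocal} Poincaré inequality for $T_2$ whose constant does not degenerate as $\varepsilon\to0$, the difficulty being that $J_\varepsilon(x-y)=J(x_1-y_1,\varepsilon(\tilde x_2-\tilde y_2))$ collapses to the $\tilde x_2$-independent kernel $J(x_1-y_1,0)$. I would handle this by domination by a \emph{fixed} kernel. Since $J$ is continuous and $J(0)>0$, there are $\delta,\rho>0$ with $J(z)\ge\delta$ for $|z|\le\rho$; then for every $\varepsilon\in(0,1]$ one has $J_\varepsilon(x-y)\ge\delta$ on the $\varepsilon$-independent set $S=\{(x,y)\in R\times R:\ |x_1-y_1|\le\rho/2,\ |\tilde x_2-\tilde y_2|\le\rho/2\}$, because there the argument of $J$ has norm at most $\rho/\sqrt2<\rho$ (here $\varepsilon\le1$ and $R_2$ bounded are used). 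Hence $T_2\ge\delta\iint_S|v(y)-v(x)|^2$, and since the relation induced on the bounded connected domain $R=R_1\times R_2$ by the indicator of $S$ is connected (each coordinate can be moved in steps of length $\le\rho/2$), the standard nonlocal Poincaré inequality for a fixed integrable kernel (see \cite{andreu2010nonlocal}) gives $T_2\ge c_0\|v-\bar v\|_{L^2(R)}^2$ with $c_0>0$ independent of $\varepsilon$.

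For $T_1$ I would invoke the classical ($\varepsilon$-free) Poincaré--Wirtinger inequality on the fixed domain $\Omega$, namely $T_1\ge c_\Omega\|u-\bar u\|_{L^2(\Omega)}^2$. For $T_3$ the goal is to extract the average difference: writing $v(x)-u(y)=(\bar v-\bar u)+(v(x)-\bar v)-(u(y)-\bar u)$ and using $|a+r|^2\ge\frac12 a^2-r^2$, then integrating against $G_\varepsilon$, I obtain
\[
T_3\ \ge\ \frac{\kappa}{2}\,|\bar v-\bar u|^2\ -\ C_G\big(\|u-\bar u\|_{L^2(\Omega)}^2+\|v-\bar v\|_{L^2(R)}^2\big),
\]
where the error is controlled by the uniform bounds $\int_\Omega G_\varepsilon(x-y)\,dy\le\|G\|_{L^1}$ and $\sup_y\int_R G_\varepsilon(x-y)\,d\tilde x\le C$ (the latter finite uniformly in $\varepsilon$ because the kernel arguments stay in a compact set on which the continuous $G$ is bounded and $|R_2|<\infty$), and where $\kappa=\inf_{\varepsilon\in(0,1]}\int_R\int_\Omega G_\varepsilon>0$, a secondary point following from nontriviality of $G$, continuity of $\varepsilon\mapsto\int_R\int_\Omega G_\varepsilon$ and positivity of the limit $\int_R\int_\Omega G^{*}$ (nondegeneracy of the coupling).

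Finally I would combine the three estimates. They give $E\ge\alpha\big(\|u-\bar u\|^2+\|v-\bar v\|^2\big)$ with $\alpha=\min(c_\Omega/2,c_0/4)$ (discarding $T_3\ge0$), and also $E\ge\tfrac12 T_3\ge\tfrac{\kappa}{4}|\bar v-\bar u|^2-\tfrac{C_G}{2}\big(\|u-\bar u\|^2+\|v-\bar v\|^2\big)$; a convex combination $\theta E+(1-\theta)E$ with $\theta$ close to $1$ absorbs the negative oscillation terms and yields $E\ge\beta\big(\|u-\bar u\|^2+\|v-\bar v\|^2+|\bar v-\bar u|^2\big)$ with $\beta>0$ independent of $\varepsilon$. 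The constraint $|\Omega|\bar u+|R|\bar v=0$ forces $\bar u=-(|R|/|\Omega|)\bar v$, so $|\bar v-\bar u|^2$ controls both $\bar u^2$ and $\bar v^2$; together with the identity of the first paragraph this gives $E\ge C\big(\int_\Omega u^2+\int_R v^2\big)$ with $C>0$ independent of $\varepsilon$, i.e. \eqref{ineq1} and $\lambda_1^\varepsilon\ge C$. An equivalent route is a compactness/contradiction argument: if $\lambda_1^{\varepsilon_n}\to0$, normalized eigenfunctions force all three terms to vanish, so (via the same uniform nonlocal Poincaré bound, Rellich for $u$, and passage to the limit in the coupling) the limit is constant with zero average, contradicting unit norm; the decisive ingredient there is again the $\varepsilon$-uniform lower bound for $T_2$.
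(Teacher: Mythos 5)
Your proposal is correct, but it takes a genuinely different route from the paper's. The paper argues by contradiction and compactness: assuming the bound fails along a sequence $\varepsilon_n \to 0$ with pairs normalized in $L^2$, it extracts subsequences (Rellich for $u^{\varepsilon_n}$; for $v^{\varepsilon_n}$, the same $\varepsilon$-uniform nonlocal Poincar\'e inequality you use, quoted from Lemma 3.1 of \cite{CERW} with constant depending only on the lower bound of $J$ near the origin and on $R$), shows that both limits are one and the same constant because the coupling term vanishes, and then contradicts the unit normalization. You instead prove the coercivity \eqref{ineq1} directly and quantitatively: Poincar\'e--Wirtinger for $T_1$; for $T_2$, domination of $J_\varepsilon$ from below by a fixed kernel on an $\varepsilon$-independent set (this is the same key mechanism as the paper's bound $J(x_1-y_1,\varepsilon(x_2-y_2))\ge M/2$ on a strip, and your version is in fact cleaner since it holds for all $\varepsilon\le 1$ rather than ``$\varepsilon$ small enough''); and for $T_3$, an elementary expansion isolating $|\bar v-\bar u|^2$ up to oscillation errors, after which a convex combination absorbs the errors and the mass constraint converts $|\bar v-\bar u|^2$ into control of both averages. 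What your route buys is an explicit constant $C$ (in terms of $c_\Omega$, $c_0$, $\kappa$, $C_G$, $|\Omega|$, $|R|$) and no recourse to subsequences, weak limits, or limit passage in the coupling term; the paper's route is softer and shorter because it outsources the quantitative content to the contradiction. One shared caveat: your $\kappa=\inf_{\varepsilon\in(0,1]}\int_R\int_\Omega G_\varepsilon>0$ is really a geometric nondegeneracy condition on the support of $G$ relative to $\Omega$ and $R$, not a consequence of nontriviality of $G$ alone, so your parenthetical justification overstates slightly; however, the paper's proof rests on exactly the same implicit assumption when it deduces $k^*=u^*$ from the vanishing of $\int_R\int_\Omega G^{*}(x-y)(v^{*}(x)-u^{*}(y))^2\,dy\,dx$, so this is not a gap of your argument relative to the paper's.
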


\begin{proof}
Let us argue by contradiction. Suppose that \eqref{ineq1} is not hold, that means that, for every $n \in \mathbb{N}$ there exists a subsequence $\{\varepsilon_n\} \to 0$ and $\{w^{\varepsilon_n}\} = \{(u^{\varepsilon_n}, v^{\varepsilon_n}) \} \in L^{2}(\hat{\Omega})\cap H^{1}(\Omega)$ such that
$$
 \int_{\Omega}u^{\varepsilon_n}+\int_{R}v^{\varepsilon_n}=0,
$$
$$
 \int_{\Omega}(u^{\varepsilon_n})^2+\int_{R}(v^{\varepsilon_n})^2=1,
$$
and
$$
\frac{1}{2}\int_{\Omega}|\nabla u^{\varepsilon_n}|^2 dx+\frac{1}{4}\int_{R}\int_R J_{\varepsilon_n}(x-y)(v^{\varepsilon_n}(y)-v^{\varepsilon_n}(x))^2dydx+\frac{1}{2}\int_{R}\int_{\Omega}G_{\varepsilon_n}(x-y)(v^{\varepsilon_n}(x)-u^{\varepsilon_n}(y))^2dydx \leq \frac{1}{n}.
$$
Taking the limit as $n\to \infty$ we obtain
$$
\lim_{n \to \infty}\left(\frac{1}{2}\int_{\Omega}|\nabla u^{\varepsilon_n}|^2 dx\right)=0,
$$
$$
\lim_{n \to \infty}\left(\frac{1}{4}\int_{R}\int_R J_{\varepsilon_n}(x-y)(v^{\varepsilon_n}(y)-v^{\varepsilon_n}(x))^2dydx\right)=0,
$$
and
$$
\lim_{n \to \infty}\left(\frac{1}{2}\int_{R}\int_{\Omega}G_{\varepsilon_n}(x-y)(v^{\varepsilon_n}(x)-u^{\varepsilon_n}(y))^2dydx\right)=0.
$$

We have that $\int_{\Omega}(u^{\varepsilon_n})^2 dx \leq 1$, that is, $\{u^{\varepsilon_n}\}$ is bounded in $L^2(\Omega)$. 
Moreover, we get that $\{u^{\varepsilon_n}\}$ is bounded in $H^1(\Omega)$. Taking a subsequence, also denoted by $\{u^{\varepsilon_n}\}$, such that $\varepsilon_n \to 0$ we have
$$
u^{\varepsilon_n} \rightharpoonup u^{*} \quad \mbox{ in } H^{1}(\Omega)
$$
$$
u^{\varepsilon_n} \longrightarrow u^{*} \quad \mbox{ in } L^2(\Omega).
$$
Thanks to the Fatou's lemma we know that
$$
\frac{1}{2}\int_{\Omega}|\nabla u^{*}|^2 dx \leq \liminf_{\varepsilon_n}\frac{1}{2} \int_{\Omega}|\nabla u^{\varepsilon_n}|^2 dx = 0.
$$
Hence, the limit $u^{*}$ is constant in $\Omega$.

Also $\{v^{\varepsilon_n}\}$ is bounded in $L^{2}(R)$. Define $k^{\varepsilon_n} = \int_{R}v^{\varepsilon_n}$. From the bound in 
$L^2(R)$ of $v^{\varepsilon_n}$  we obtain that there exists a constant $C$ such that $|k^{\varepsilon_n}|\leq C$ and, moreover, we can take a subsequence $\{v^{{\varepsilon_n}_j}\}$ which weakly converges in $L^2(R)$ to some limit $v^{*}$ as ${\varepsilon_n}_j \to 0$ and 
such that $\{k^{{\varepsilon_n}_j}\}$ also converges to a limit that we call $k^{*}$. Consider $z^{{\varepsilon_n}_j} = v^{{\varepsilon_n}_j}-k^{{\varepsilon_n}_j}$. We have that $\int_{R}z^{{\varepsilon_n}_j}=0$, therefore, see \cite{CERW} and \cite{andreu2010nonlocal}, there exists a constant $C>0$ independent of $\varepsilon$ such that
\begin{equation}\label{julio-ineq}
\int_{R}\int_{R}J(x_1-y_1,{\varepsilon_n}_j(\tilde{x}_2-\tilde{y}_2))(z^{{\varepsilon_n}_j}(y)-z^{{\varepsilon_n}_j}(x))^2d\tilde{y}d\tilde{x} \geq C \int_{R}(z^{{\varepsilon_n}_j}(x))^2 dx.
\end{equation}
In fact, since $J$ is continuous, from our hypothesis on $J$, we get that there exists constants $M, \delta >0$ such that
$$
J(x_1-y_1,x_2-y_2)\geq M, \quad \mbox{whenever} \quad |(x_1-y_1,x_2-y_2)| <\delta.
$$
Then, it follows that
$$
J(x_1-y_1,\varepsilon(x_2-y_2))\geq \frac{M}{2}, \quad \mbox{whenever} \quad |x_1-y_1| <\frac{\delta}{2}, \quad \varepsilon|x_2-y_2|<\frac{\delta}{2},
$$
for every $\varepsilon$ small enough.
Hence, the inequality \eqref{julio-ineq} follows from Lemma 3.1 in \cite{CERW} and the constant $C$ only depends on $M, \delta$ and $R$
but not on $\varepsilon$.

Note that we have
\begin{align*}
\lim_{n \to \infty}\left(\frac{1}{4}\int_{R}\int_R J_{\varepsilon_n}(x-y)(z^{{\varepsilon_n}_j}(y)-z^{{\varepsilon_n}_j}(x))^2dydx\right)=\lim_{n \to \infty}\left(\frac{1}{4}\int_{R}\int_R J_{\varepsilon_n}(x-y)(v^{{\varepsilon_n}_j}(y)-v^{{\varepsilon_n}_j}(x))^2dydx\right) = 0,
\end{align*}
as $\varepsilon_{n_j} \to 0$, 
which yields
$$
0 \geq \lim_{n \to \infty} C \int_{R}(z^{{\varepsilon_n}_j}(x))^2 dx.
$$
From here we conclude that $z^{{\varepsilon_n}_j} \to 0$ in $L^2(R)$, which leads to $v^{{\varepsilon_n}_j} \to k^{*}$ strongly in $L^2(R)$.
Finally, as $u^{\varepsilon_n} \to u^{*}$ in $L^2(\Omega)$ and $v^{\varepsilon_n} \to k^{*}$ in $L^2(R)$, we can take the limit as $\varepsilon_n \to 0$ 
and obtain
\begin{align*}
0=\lim_{n \to \infty}\left(\frac{1}{2}\int_{R}\int_{\Omega}G_{\varepsilon_n}(x-y)(v^{\varepsilon_n}(x)-u^{\varepsilon_n}(y))^2dydx\right)=\frac{1}{2}\int_{R}\int_{\Omega}G^{*}(x-y)(v^{*}(x)-u^{*}(y))^2dydx.
\end{align*}
From where it follows that $k^{*}-u^{*}=0$, that is, $k^{*}=u^{*}$. 
From 
$$
 \int_{\Omega}u^{\varepsilon_n}+\int_{R}v^{\varepsilon_n}=0,
$$
it follows that 
$$
 \int_{\Omega}u^{*}+\int_{R}k^{*}=0,
$$
and since we have $k^{*}=u^{*}$ we get
$$k^{*}=u^{*}=0.$$ 
Now, from 
$$
 \int_{\Omega}(u^{\varepsilon_n})^2+\int_{R}(v^{\varepsilon_n})^2=1,
$$
and the strong convergence in $L^2$ we obtain
$$
 \int_{\Omega}(u^{*})^2+\int_{R}(k^{*})^2=1,
$$
which 
yields a contradiction. The proof is complete.
\end{proof}

With this lemma, following \cite{GQR} (see also \cite{santos2020}), 
we can provide an estimate for the asymptotic behavior of the solutions of the problem \eqref{neumann-fixed-domain}, that is, the solutions 
$\{u^{\varepsilon},v^{\varepsilon}\}_{\varepsilon >0}$ converges to the mean value of the initial condition 
\begin{equation} \label{decay.exp}
 \Big\| (u^{\varepsilon},v^{\varepsilon})(\cdot,t)-\fint (u_0,v_0) \Big\|_{L^2(\widehat{\Omega})} \leq C_{1} e ^{-C_2 t},
\end{equation}
with $C_{1}, C_{2}$ finite positive constants, independent of $\varepsilon$ and also, $C_2$ independent of the initial condition. 
Hence, we have that the $L^2$-norm of $\{u^{\varepsilon},v^{\varepsilon}\}_{\varepsilon >0}$ is bounded (independently of $\varepsilon$).
Here
$$
\fint (u_0,v_0) = \frac{\displaystyle \int_\Omega u_0 + \int_{R} v_0 }{ |\Omega| + |R|}.
$$

Now we are ready to proceed with the proof of Theorem \ref{teo1}.

\begin{proof}[Proof of Theorem \ref{teo1}]
First, we observe that, since $J$ and $G$ are continuous functions, we have
$$
J_{\varepsilon}(x-y) = J(x_1-y_1,\varepsilon(x_2-y_2)) \longrightarrow J^{*}(x-y) = J(x_1-y_1,0), \quad \mbox{and}
$$
$$
G_{\varepsilon}(x-y) = G(x_1-y_1,\varepsilon x_2-y_2) \longrightarrow G^{*}(x-y) = G(x_1-y_1,0-y_2),
$$
as $\varepsilon \to 0$, uniformly in $x,y$.

From Lemma \ref{positive-constant}, since
$\{v^{\varepsilon}\}$
is bounded in $L^\infty (0,T;L^2(R))$ we can take a subsequence, also denoted by $\{v^{\varepsilon}\}$, such that  
$$
v^{\varepsilon} \rightharpoonup v^{*} \quad \mbox{weakly} \quad \mbox{in} \quad L^\infty(0,T;L^2(R)) \quad \mbox{as} \quad \varepsilon \to 0.
$$

On the other hand, we have that
\begin{subequations}
\begin{align}
\int_{\Omega}|u^{\varepsilon}(x,t)|^2dx \qquad \mbox{and} \qquad \int_{\Omega}|\nabla u^{\varepsilon}(x,t)|^2dx, \label{eq:v}
\end{align}
\end{subequations}
are also bounded in $L^2(\Omega)$ (uniformly in $t\in [0,T]$). Hence, along a subsequence if necessary,
\begin{subequations}
\begin{align}
u^{\varepsilon} \rightharpoonup u^{*} \quad \mbox{weakly} \quad \mbox{in} \quad L^\infty(0,T;H^1(\Omega)) \quad \mbox{as} \quad \varepsilon \to 0, \label{eq:u} \\[10pt]
u^{\varepsilon} \to u^{*} \quad \mbox{strongly} \quad \mbox{in} \quad L^\infty(0,T;L^2(\Omega)) \quad \mbox{as} \quad \varepsilon \to 0.
\end{align}
\end{subequations}

Now we consider the weak form of \eqref{neumann-fixed-domain}, that is,
using the symmetry of the kernel $J$ we have the following identities,
\begin{subequations}
\begin{align}
  \int_{\Omega}  u^{\varepsilon}(x,T) \varphi(x,T) dx & - \int_0^T \int_\Omega u^\varepsilon (x,t)\frac{\partial \varphi}{\partial t}(x,t) dxdt  
  = \int_\Omega u_0(x) \varphi (x,0) dx-
  \int_0^T\int_{\Omega} \nabla u^{\varepsilon}(x,t)\nabla \varphi(x,t) dx dt  \\
 & \quad  +\int_0^T \int_{\Omega} \int_{R}G_{\varepsilon}(x-y)(v^{\varepsilon}(\tilde{y},t)-u^{\varepsilon}(x,t))\varphi(x,t) d\tilde{y} dx dt, \label{eq:3} \\
 \int_{R}  v^{\varepsilon}(x,T) \varphi(x,T) dx  & - \int_0^T \int_R v^\varepsilon (x,t) \frac{\partial \varphi}{\partial t} (x,t) dxdt  
 = \int_R v_0(x) \varphi (x,0) dx  \\
 & \quad -\frac{1}{2}\int_0^T
 \int_{R} \int_{R} J_{\varepsilon}(x-y)\left(v^{\varepsilon}(\tilde{y},t)-v^{\varepsilon}(\tilde{x},t) \right)(\varphi(y,t)-\varphi(x,t)) d\tilde{y} dx dt \\
 & \quad - \int_0^T \int_{R} \int_{\Omega}G_{\varepsilon}(x-y)(v^{\varepsilon}(\tilde{x},t)-  u^{\varepsilon}(y,t))\varphi(x,t) dy dx dt,\label{eq:4}  
\end{align}
\end{subequations}
for every $\varphi \in C^1(H^1(\Omega) \cup L^2 (R))$.

Now, let us take a test function that depends only on the first variable, for $x \in R$, that is, $\varphi = \varphi(x_1)$
and us analyze the limit as $\varepsilon \to 0$ of each term in the previous equations. We have
\begin{align}\label{limit-purelocal}
\lim_{\varepsilon \longrightarrow 0 }\left( \int_0^T\int_{\Omega} \nabla u^{\varepsilon} \nabla \varphi dx dt\right) = \left( \int_0^T\int_{\Omega} \nabla u^{*} \nabla \varphi dx dt\right).
\end{align}
Now, note that
$$
\begin{array}{l}
\displaystyle
\int_0^T \int_{\Omega} \int_{R_1}\int_{R_2}G(x_1-y_1,x_2-\varepsilon \tilde{y}_2)(v^{\varepsilon}(y_1,\tilde{y_2},t)-u^{\varepsilon}(x_1,x_2,t))\varphi(x_1,x_2,t) d\tilde{y_2}dy_1dx_2dx_1dt \\[10pt]
\displaystyle
=\int_0^T \int_{\Omega} \int_{R_1}\int_{R_2} \Big[ G (x_1-y_1,x_2-\varepsilon \tilde{y}_2) - G (x_1-y_1,x_2)  \Big]
\\[10pt]
\qquad \qquad \qquad \qquad \qquad \qquad \qquad  \displaystyle \times (v^{\varepsilon}(y_1,\tilde{y_2},t)-u^{\varepsilon}(x_1,x_2,t))\varphi(x_1,x_2,t) d\tilde{y_2}dy_1dx_2dx_1dt \\[10pt]
\quad \displaystyle 
+\int_0^T \int_{\Omega} \int_{R_1}\int_{R_2}G(x_1-y_1,x_2)(v^{\varepsilon}(y_1,\tilde{y_2},t)-u^{\varepsilon}(x_1,x_2,t))\varphi(x_1,x_2,t) d\tilde{y_2}dy_1dx_2dx_1dt.
\end{array}
$$
Notice that the measure in $\Omega$ is the product measure and hence when we integrate we have $dx= dx_1 dx_2$.

Since 
$$
 \Big[ G (x_1-y_1,x_2-\varepsilon \tilde{y}_2) - G(x_1-y_1,x_2)  \Big]
$$
goes to zero uniformly and $u^\varepsilon$ and $v^\varepsilon$ are bounded in $L^2$, the first term goes to zero as $\varepsilon \to 0$
and therefore we concentrate in the second. 
To analyze the limit of the second term, we observe that $G(x_1-y_1,x_2)$ does not depend on $y_2$ and hence we can rewrite
this term as follows,
$$
\begin{array}{l}
\displaystyle
\int_0^T \int_{\Omega} \int_{R_1}\int_{R_2}G(x_1-y_1,x_2)(v^{\varepsilon}(y_1,\tilde{y_2},t)-u^{\varepsilon}(x_1,x_2,t))\varphi(x_1,x_2,t) d\tilde{y_2}dy_1dx_2dx_1dt \\[10pt]
\displaystyle
= \int_0^T \int_{\Omega} \varphi(x_1,x_2,t)\int_{R_1}G(x_1-y_1,x_2)
\left[\int_{R_2}v^{\varepsilon}\left(y_1,\tilde{y_2},t)-u^{\varepsilon}(x_1,x_2,t)\right)d\tilde{y}_2\right]dy_1 dx_2 dx_1 dt.
\end{array}
$$
Let
\begin{equation}\label{Veps}
V^{\varepsilon}(y_1,t)=\int_{R_2}v^{\varepsilon}(y_1,\tilde{y}_2,t)d\tilde{y}_2. 
\end{equation}
Observe that, since $v^{\varepsilon}$ is bounded in $L^\infty (0,T;L^{2}(R))$, then $V^{\varepsilon}$ is also bounded in $L^\infty(0,T;L^2(R_{1}))$ 
so, taking a subsequence if necessary
$$
V^{\varepsilon} \rightharpoonup V^{*} \quad \mbox{weakly} \quad \mbox{in} \quad L^\infty(0,T;L^2(R_1)).
$$
Using \eqref{Veps} we obtain
\begin{align*}
\int_0^T \int_{\Omega}\varphi(x_1,x_2,t)\int_{R_1}G(x_1-y_1,x_2)\left[V^{\varepsilon}(y_1)-|R_{2}|u^{\varepsilon}(x_1,x_2,t)\right]dy_1 dx_2 dx_1dt.
\end{align*}

Therefore, we can take the limit as $\varepsilon \to 0$ and obtain
$$
\begin{array}{l}
\displaystyle 
\lim_{\varepsilon \to 0} \int_0^T\int_{\Omega}\varphi(x_1,x_2,t)\int_{R_1}G_{\varepsilon}(x_1-y_1,x_2-\varepsilon \tilde{y}_2)\left[V^{\varepsilon}(y_1)-|R_{2}|u^{\varepsilon}(x_1,x_2,t)\right]dy_1 dx_2 dx_1 dt \\[10pt] 
\displaystyle 
 = \int_0^T \int_{\Omega}\varphi(x_1,x_2,t)\int_{R_1} \left(G(x_1-y_1,x_2)\right) \lim_{\varepsilon \to 0}\left(V^{\varepsilon}(y_1)\right)dy_1dx_2dx_1 dt  \\[10pt] \displaystyle
 \qquad - \int_0^T |R_2| \int_{\Omega}\varphi(x_1,x_2,t)\int_{R_1} \left(G(x_1-y_1,x_2)\right) \lim_{\varepsilon \to 0}\left(u^{\varepsilon}(x_1,x_2,t)\right)dy_1dx_2dx_1 dt \\[10pt]
\displaystyle = \int_0^T \int_{\Omega}\varphi(x_1,x_2,t)\int_{R_1} G(x_1-y_1,x_2-0)V^{*}(y_1)dy_1dx_2dx_1 dt\\[10pt]
\qquad \displaystyle  - \int_0^T |R_2|\int_{\Omega}\varphi(x_1,x_2,t)\int_{R_1}G(x_1-y_1,x_2-0)u^{*}(x_1,x_2,t)dy_1dx_2dx_1dt. \label{limit-couple-partI}
\end{array}
$$

The same idea can be applied for the second integral in the weak form of the problem using the properties of the kernel $G$ and Fubini's theorem, which leads to 
\begin{align}\label{limit-couple-partII}
\int_0^T \int_{R_1}\varphi(x_1,t)\int_{\Omega} G(x_1-y_1,0-y_2)\left(V^{*}(y_1)-|R_2|u^{*}(x_1,x_2,t)\right)dy_1dx_2dx_1 dt.
\end{align}

Concerning the terms that involve time derivatives, from the $L^\infty-L^2$ convergence we obtain
$$
\lim_{\varepsilon \to 0} - \int_0^T \int_\Omega u^\varepsilon (x,t) \frac{\partial \varphi}{\partial t} (x,t) dxdt
= - \int_0^T \int_\Omega u^* (x,t) \frac{\partial \varphi}{\partial t} (x,t) dxdt
$$
and 
$$
\lim_{\varepsilon \to 0} - \int_0^T \int_R v^\varepsilon (x,t) \frac{\partial \varphi}{\partial t} (x,t) dxdt
= - \int_0^T \int_{R_0} V^* (x,t) \frac{\partial \varphi}{\partial t} (x,t) dxdt
$$

Finally, we will deal with the pure nonlocal integral. By Fubini's theorem and \eqref{Veps} we get
$$
\begin{array}{l}
\displaystyle
\int_0^T \int_{R_1}\int_{R_2}\int_{R_1}\int_{R_2} J_{\varepsilon}(x-y)\left(v^{\varepsilon}(y_1,\tilde{y}_2,t)-v^{\varepsilon}(x_1,\tilde{x}_2,t) \right)\varphi(x_1,t) d\tilde{y}_2 dy_1 d\tilde{x}_2 dx_1 dt \\[10pt] \displaystyle
= \int_0^T \int_{R_1}\varphi(x_1,t) \int_{R_1} J(x_1-y_1,\varepsilon(\tilde{x}_2,\tilde{y}_2))\left[\int_{R_2}\int_{R_2}\left(v^{\varepsilon}(y_1,\tilde{y}_2,t)-v^{\varepsilon}(x_1,\tilde{x}_2,t) \right)d\tilde{y}_2 d\tilde{x}_2\right]  dy_1  dx_1 dt \\[10pt] 
\displaystyle
= \int_0^T \int_{R_1}\varphi(x_1,t) \int_{R_1} J(x_1-y_1,\varepsilon(\tilde{x}_2,\tilde{y}_2))\left(|R_2|V^{\varepsilon}(y_1)-|R_2|V^{\varepsilon}(x_1) \right)  dy_1  dx_1 dt . 
\end{array}
$$
Now, we can take the limit as $\varepsilon \to 0$, it follows that
$$
\begin{array}{l}
\displaystyle
\lim_{\varepsilon \to 0}\int_0^T\int_{R_1}\varphi(x_1,t) \int_{R_1} J(x_1-y_1,\varepsilon(\tilde{x}_2,\tilde{y}_2))\left(|R_2|V^{\varepsilon}(y_1)-|R_2|V^{\varepsilon}(x_1) \right)  dy_1  dx_1 dt \\[10pt] \displaystyle
= \int_0^T |R_2|\int_{R_1}\varphi(x_1,t)\int_{R_1}\lim_{\varepsilon \to 0}\left(J(x_1-y_1,\varepsilon(\tilde{x}_2,\tilde{y}_2))\right)\lim_{\varepsilon \to 0}(V^{\varepsilon}(y_1))dy_1dx_1 dt \\[10pt]
\displaystyle \qquad - \int_0^T |R_2|\int_{R_1}\varphi(x_1,t)\int_{R_1}\lim_{\varepsilon \to 0}\left(J(x_1-y_1,\varepsilon(\tilde{x}_2,\tilde{y}_2))\right)\lim_{\varepsilon \to 0}(V^{\varepsilon}(x_1))dy_1dx_1 dt\\[10pt]
\displaystyle
= \int_0^T |R_2|\int_{R_1}\varphi(x_1,t)\int_{R_1}J(x_1-y_1,0)(V^{*}(y_1) - V^{*}(x_1))dy_1dx_1 dt. 
\end{array}
$$
Hence, since this procedure can be carry over for every $T>0$, the limit equation, defined in the domain $\Omega_0 = \Omega \cup R_{1}$, (see Figure \ref{fig2}) is given by the system \eqref{limit-problem-neumann},
\begin{align*}
\begin{cases}
 \displaystyle   \frac{\partial u^{*}}{\partial t}(x,t)  = \Delta u^{*}(x,t) + \int_{R_{1}}G^{*}(x-y)(V^{*}(y,t)-|R_2|u^{*}(x,t))dy, \quad (x,t) \in \Omega\times (0,\infty),  \\[10pt]
 \displaystyle   \frac{\partial u^{*}}{\partial \eta}(x,t)  =  0, \quad (x,t) \in \partial \Omega \times (0,\infty),  \\[10pt]
 \displaystyle \frac{\partial V^{*}}{\partial t}(x,t)=|R_2|\int_{R_1} J^{*}(x-y)\left(V^{*}(y,t)-V^{*}(x,t) \right)dy 
  \\[10pt]
  \displaystyle \qquad \qquad \qquad \qquad  - \int_{\Omega}G^{*}(x-y)(V^{*}(x,t)- |R_2| u^{*}(y,t))dy, \ (x,t) \in R_1\times (0,\infty), \\[10pt] 
 \displaystyle    u^{*}(x,0)  =u_{0}^{*}(x), \quad x \in \Omega, \\[5pt]
 V^{*}(x,0)  =V^{*}_{0}(x), \quad x \in R_1,
    \end{cases}
    \end{align*}
where $J^{*}(x-y) = J(x_1-y_1,0)$ and $G^{*}(x-y) = G(x_1-y_1,x_2-0)$.

To finish the proof we show existence and uniqueness of a solution of the solution to the limit problem \eqref{limit-problem-neumann}
(notice that up to this point we have convergence along subsequences $\varepsilon_j \to 0$, proving uniqueness 
of the limit we obtain the existence of the full limit as $\varepsilon \to 0$).

Thanks to the limit along subsequences we ensure the existence of a solution $(u^{*},V^{*})$ for the limit problem. To show the uniqueness let us suppose that there exists two solutions $(u^{*}_{1},V^{*}_{1})$ and $(u^{*}_{2},V^{*}_{2})$ of \eqref{limit-problem-neumann}. Define $w^{*} = u_{1}^{*}- u_{2}^{*}$ and $z^{*} = V_{1}^{*}- V_{2}^{*}$. The pair of $(w^{*},z^{*})$ satisfies the following equations
\begin{align}\label{diff-system}
\begin{cases}
 \displaystyle   \frac{\partial w^{*}}{\partial t}(x,t)  = \bigtriangleup w^{*}(x,t) + \int_{R_{1}}G^{*}(x-y)(z^{*}(y,t)-|R_2|w^{*}(x,t))dy, 
 \quad (x,t) \in \Omega \times (0,\infty),  \\[10pt]
 \displaystyle   \frac{\partial w^{*}}{\partial \eta}(x,t)  =  0, \quad  (x,t) \in \partial \Omega \times (0,\infty),  \\[10pt]
 \displaystyle \frac{\partial z^{*}}{\partial t}(x,t)=|R_2|\int_{R_1} J^{*}(x-y)\left(z^{*}(y,t)-z^{*}(x,t) \right)dy 
   - \int_{\Omega}G^{*}(x-y)(z^{*}(x,t)- |R_2| w^{*}(y,t))dy, \\[10pt]
   \qquad\qquad\qquad\qquad\qquad\qquad \qquad\qquad\qquad\qquad (x,t) \in R_1 \times (0,\infty), \\[10pt] 
 \displaystyle    w^{*}(x,0)  = 0, \quad x \in \Omega, \\[5pt]
 z^{*}(x,0)  =0, \quad x \in R_1.
    \end{cases}
    \end{align}
Multiplying the first equation of the problem \eqref{diff-system} by $\frac{w^{*}}{2}$ and integrating over $\Omega$ and, the second equation by $\frac{z^{*}}{2}$ and integrating over $R_{1}$, we get
\begin{align}
|R_2|\int_{\Omega}\frac{\partial w^{*}}{\partial t}w^{*}dx+\int_{R_{1}}\frac{\partial z^{*}}{\partial t}z^{*}dx & = -|R_{2}|\int_{\Omega}|\nabla w^*|^{2}dx
-\frac{|R_2|}{2}\int_{R_1}\int_{R_1}J^{*}(x-y)(z^{*}(y,t)-z^{*}(x,t))^{2}dydx \\
& \qquad - \int_{R_{1}}\int_{\Omega}G^{*}(x-y)(z^{*}(y,t)-|R_2|w^{*}(x,t))^2dydx \\
& = -2 E(w^{*},z^{*}) \leq 0.
\end{align}
Hence, if we let $$f'(t) = |R_2|\int_{\Omega}\frac{\partial w^{*}}{\partial t}w^{*}dx+\int_{R_{1}}\frac{\partial z^{*}}{\partial t}z^{*}dx,$$ 
we have $$f(t) = \frac{|R_2|}{2}\int_{\Omega}(w^{*})^2dx+\int_{R_{1}}(z^{*})^2dx.$$

Now, from Lemma \ref{positive-constant}, we obtain
\begin{align*}
2 E(w^{*},z^{*}) \geq 2 \lambda_{1} \left(\frac{|R_2|}{2}\int_{\Omega}(w^{*})^2dx+\int_{R_{1}}(z^{*})^2dx\right) = 2 \lambda_{1} f(t),
\end{align*}
which implies $-2 E(w^{*},z^{*}) \leq -2 \lambda_{1} f(t)$ and then we get
\begin{align}
f'(t) \leq -2 \lambda_{1} f(t). \label{ineq}
\end{align}
Hence, Gronwall's inequality gives that
$$
f(t) \leq e^{-2 \lambda_{1} t}f(0),
$$
where $f(0) = \frac{|R_2|}{2}\int_{\Omega}(w^{*})^2(x,0)dx+\int_{R_{1}}(z^{*})^2(x,0)dx$. Since $f(t)\geq 0$ and $f(0)=0$ we have that
$$
0 \leq f(t) \leq 0,
$$ 
that is $$f(t)\equiv 0$$ and hence $$w^{*}=0 \qquad \mbox{ and } \qquad z^{*}=0,$$ which means $u_{1}^{*}=u^{*}_2$ and $V_{1}^{*}=V_{2}^{*}$. This guarantee the uniqueness of the solution for the problem \eqref{limit-problem-neumann} as we wanted to show.
\end{proof}

Now, we include several remarks. 

\begin{remark}
{\rm From our previous arguments, we also conclude that the limit problem \eqref{limit-problem-neumann} is well-posed in $L^{2}(\Omega_{0})$
(we have existence, uniqueness and continuous dependence with respect to the initial data of the solutions).} 
\end{remark}

\begin{remark} {\rm
We only prove weak convergence of the solution of the problem \eqref{eps-case1} to the solution of the problem \eqref{limit-problem-neumann} 
(we do not prove strong convergence in the $L^2$-norm). Moreover, we only guarantee the uniqueness of $V^{*}$ and this is not enough to ensure the uniqueness of $v^{*}$.}
\end{remark}

\begin{remark} {\rm
Observe that, instead of the usual metric in $L^2 (\Omega \cup R)$ we choose to work with the metric \eqref{different-metric}. This choose was made to obtain a nontrivial limit. In fact, using this metric we can observe the coupling of the local part of the problem in the domain $\Omega$
with the nonlocal part in the lower dimensional domain $R_1$. 

Now, if we consider the usual metric in $L^2$ and the energy functional
$$
E(u,v) = \frac{1}{2}\int_{\Omega} |\nabla  u |^2 dx +\frac{1}{4 \varepsilon^{N_2}}\int_{R_{\varepsilon}}\int_{R_{\varepsilon}}J(x-y)\left(v(y)-v(x)\right)^2 dydx + \frac{1}{2}\int_{R_{\varepsilon}}\int_{\Omega}G(x-y)\left(v(x)-u(y)\right)^2 dy dx,
$$
the associated evolution problem (after the change of variables) is given by
\begin{align}\label{new-complete-fixed}
\begin{cases}
 \displaystyle   \frac{\partial u^{\varepsilon}}{\partial t}(x,t)  = \bigtriangleup u^{\varepsilon}(x,t) + \varepsilon\int_{R}G_{\varepsilon}(x-y)(v^{\varepsilon}(\tilde{y},t)-u(x,t))d\tilde{y}, \quad x \in \Omega, \quad t >0,  \\[10pt]
 \displaystyle   \frac{\partial u^{\varepsilon}}{\partial \eta}(x,t)  =  0, \quad x \in \partial \Omega, \quad t>0,  \\[10pt]
 \displaystyle \frac{\partial v^{\varepsilon}}{\partial t}(\tilde{x},t)=\int_{R} J_{\varepsilon}(x-y)\left(v^{\varepsilon}(\tilde{y},t)-v^{\varepsilon}(\tilde{x},t) \right)d\tilde{y} 
   - \int_{\Omega}G_{\varepsilon}(x-y)(v^{\varepsilon}(\tilde{x},t)-  u^{\varepsilon}(y,t))dy, \quad x \in R, \quad t>0,  \\[10pt]
 \displaystyle    u(x,0)  =u_{0}(x), \quad x \in \Omega, \\[5pt]
 v(\tilde{x},0)  =v_{0}(\tilde{x}), \quad \tilde{x} \in R,
    \end{cases}
    \end{align}
where $J_{\varepsilon}(x-y)=J(x_1-y_1,\varepsilon(\tilde{x}_2-\tilde{y}_2))$, $G_{\varepsilon}(x-y)=G(x_1-y_1,\varepsilon \tilde{x}_2-\tilde{y}_2)$ and $v^{\varepsilon}(x_1,\tilde{x}_2,t)=v(x_1,\varepsilon \tilde{x}_2,t)$. Observe that taking the limit as $\varepsilon \to 0$ the nonlocal term that appears in the  equation for $u^{\varepsilon}$ goes to zero and hence we will lose the coupling term in the limit (the equation for $u^{*}$ will be independent of $V^{*}$). Also in this case, the limit problem will be well-posed, in the sense that we can ensure existence and uniqueness of the solution, but it is
less interesting.}
\end{remark}

As we expected, the limit problem \eqref{limit-problem-neumann} preserves the total mass of the solution.
This follows from the limit procedure and the fact that the problem \eqref{neumann-fixed-domain} preserves the total mass for every $\varepsilon >0$. 
We include below a direct proof of this fact for completeness. 

\begin{theorem}\label{mass-theo}
The solution $(u^{*},V^{*})$ of the problem 
\eqref{limit-problem-neumann}, with initial data $u_{0}^{*} \in H^{1}(\Omega)$ and $V^{*}_{0} \in L^{2}(R_1)$ satisfies
\begin{equation}\label{mass}
\int_{\Omega}u^{*}(x,t)dx+\int_{R_1}V^{*}(x,t)dx = \int_{\Omega}u^{*}_0 dx+\int_{R_1}V^{*}_0 dx, \quad \forall t \geq 0.
\end{equation}
\end{theorem}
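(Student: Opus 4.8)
The plan is to show that the time derivative of the total mass vanishes identically. Assuming the regularity of $(u^{*},V^{*})$ guaranteed by the well-posedness of \eqref{limit-problem-neumann}, I would differentiate under the integral sign,
\[
\frac{d}{dt}\left(\int_\Omega u^{*}\,dx + \int_{R_1} V^{*}\,dx\right) = \int_\Omega \frac{\partial u^{*}}{\partial t}\,dx + \int_{R_1}\frac{\partial V^{*}}{\partial t}\,dx,
\]
substitute the two evolution equations of \eqref{limit-problem-neumann}, and check that every resulting term cancels.

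First I would treat the purely local contribution: integrating $\Delta u^{*}$ over $\Omega$ and using the divergence theorem together with the homogeneous Neumann condition $\partial u^{*}/\partial\eta = 0$ gives $\int_\Omega \Delta u^{*}\,dx = \int_{\partial\Omega}\partial_\eta u^{*}\,d\sigma = 0$. Next, the pure nonlocal term from the $V^{*}$ equation,
\[
|R_2|\int_{R_1}\int_{R_1} J^{*}(x-y)\big(V^{*}(y,t)-V^{*}(x,t)\big)\,dy\,dx,
\]
vanishes by the standard antisymmetry argument: since $J^{*}(x-y)=J(x_1-y_1,0)$ is symmetric under $x\leftrightarrow y$ (by $J(-z)=J(z)$), relabelling the integration variables shows the integral equals its own negative.

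The heart of the proof is the cancellation of the two coupling terms. Integrating the source term of the $u^{*}$ equation over $\Omega$ produces
\[
\int_\Omega\int_{R_1} G(x_1-y_1,x_2)\big(V^{*}(y_1,t)-|R_2|u^{*}(x,t)\big)\,dy_1\,dx,
\]
while integrating the coupling term of the $V^{*}$ equation over $R_1$ produces
\[
-\int_{R_1}\int_\Omega G(x_1-y_1,-y_2)\big(V^{*}(x_1,t)-|R_2|u^{*}(y,t)\big)\,dy\,dx_1,
\]
where I have made explicit the two forms that $G^{*}$ takes in the two equations (corresponding to $R_1$ embedded at height $x_2=0$). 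I would then show these two expressions coincide by applying Fubini's theorem to swap the roles of the $\Omega$- and $R_1$-variables and invoking the evenness $G(-z)=G(z)$, which turns $G(x_1-y_1,-y_2)$ into $G(y_1-x_1,y_2)$ after relabelling. Matching the $V^{*}$-parts and the $u^{*}$-parts separately shows that the two coupling contributions are equal, so their difference is zero. Adding the three vanishing pieces gives $\frac{d}{dt}(\text{total mass})=0$, and integrating in time from $0$ yields \eqref{mass}.

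The main obstacle is purely bookkeeping: one must keep track that $G^{*}$ enters as $G(x_1-y_1,x_2)$ in the local equation but as $G(x_1-y_1,-y_2)$ in the nonlocal one, and verify that the change of variables together with the symmetry of $G$ really produces identical integrals. No analytic difficulty arises beyond the symmetry of the kernels and Fubini's theorem; alternatively, one could avoid the direct computation entirely by passing to the limit in the exact mass-conservation identity for \eqref{neumann-fixed-domain}, which holds for every $\varepsilon>0$.
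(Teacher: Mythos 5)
Your proposal is correct and follows essentially the same route as the paper: differentiate the total mass in time, kill the Laplacian term via the divergence theorem and the Neumann condition, kill the pure nonlocal term via the symmetry of $J^{*}$ and Fubini, and cancel the two coupling terms via a change of variables, Fubini's theorem and the symmetry of $G$. Your explicit bookkeeping of the two forms $G(x_1-y_1,x_2)$ and $G(x_1-y_1,-y_2)$, as well as the remark that one could instead pass to the limit in the $\varepsilon$-level conservation identity, are both also present (the latter stated just before the theorem) in the paper.
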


\begin{proof}
Differentiating \eqref{mass} with respect to $t$ we obtain
\begin{align*}
\int_{\Omega}\frac{\partial u^{*}}{\partial t} dx+\int_{R_1}\frac{\partial V^{*}}{\partial t}dx &= \int_{\Omega} \bigtriangleup u^{*}(x,t)dx + \int_{\Omega}\int_{R_1}G^{*}(x-y)(V^{*}(y,t)-|R_2|u^{*}(x,t))dydx \\
& \qquad +\int_{R_1}|R_2|\int_{R_1}J^{*}(x-y)(V^{*}(y,t)-V^{*}(x,t))dydx \\
& \qquad - \int_{R_1}\int_{\Omega}G^{*}(x-y)(V^{*}(x,t)-|R_2|u^{*}(y,t))dydx \\
& = 0.
\end{align*}
Indeed, after a change of variables, due to the symmetry of $G$ and Fubini's theorem, the second and the fourth integral cancel each other. 
Also, by the symmetry of $J$ and Fubini's theorem, the second integral is zero. Finally, the first integral is zero since we have a Neumann type boundary condition for the local part.

This ends the proof.
\end{proof}

Finally, we include the study of the asymptotic behavior of the solutions for the limit problem \eqref{limit-problem-neumann}.

Notice that from the fact that the constants in \eqref{decay.exp} do not depend on $\varepsilon$ we obtain 
that the solutions for the limit problem \eqref{limit-problem-neumann}
converge exponentially to the mean value of the initial condition. We have that
\begin{equation} \label{decay.exp.789}
 \Big\| (u^{*},V^{*})(\cdot,t)-\fint (u_0,v_0) \Big\|_{L^2(\Omega_0)} \leq C_{1} e ^{-C_2 t}.
\end{equation}

However, we can obtain a better control of the constant $C_2$ and obtain an exponential decay
in terms of the first nontrivial eigenvalue associated to the limit problem.
To this end, we use the $L^2$-norm
\begin{equation}\label{lim-prob-norm}
\| (u^*,V^{*})\|_{L^2(\Omega_0)} = |R_2|\int_{\Omega}|u^{*}|^2dx+\int_{R_1}|V^{*}|^2 dx.
\end{equation}
We can define the energy functional associated to the limit problem \eqref{limit-problem-neumann} by
\begin{equation}\label{neumann-limit-energy}
\begin{array}{l}
\displaystyle
E(u^{*},V^{*}) = \frac{|R_2|}{2}\int_{\Omega}|\nabla u^{*}|^2 dx + \frac{|R_2|}{4}\int_{R_1}\int_{R_1}J^{*}(x-y)(V^{*}(y)-V^{*}(x))^2 dydx \\[10pt]
\qquad\qquad\qquad\qquad \displaystyle  +\frac{1}{2}\int_{R_1}\int_{\Omega}G^{*}(x-y)(V^{*}(x)-|R_2|u^{*}(y))^2dydx.
\end{array}
\end{equation}
Indeed, the gradient flow associated with \eqref{neumann-limit-energy}, is given by 
\begin{align*}
 \displaystyle 
 \partial_{\varphi}E(u^{*},V^{*})  & \displaystyle =\lim_{h \to 0}\frac{E(u^{*}+h\varphi,V^{*}+h\varphi)-E(u^{*},V^{*})}{h} \\
 \displaystyle  & = |R_2| \int_{\Omega} \nabla u^{*} \nabla \varphi dx + \frac{|R_2|}{2}\int_{R_1}\int_{R_1}J^{*}(x-y)(V^{*}(y)-V^{*}(x))(\varphi(y)-\varphi(x))dydx  \\
 \displaystyle  & \qquad + |R_2|\int_{R_1}\int_{\Omega}G^{*}(x-y)(V^{*}(x)-|R_2|u^{*}(y))(\varphi(x)dx - |R_2| \varphi(y))dydx.
\end{align*}
Hence, using that
$$
|R_2|\int_{\Omega}\frac{\partial u^{*}}{\partial t}\varphi(x) dx +\int_{R_1}\frac{\partial V^{*}}{\partial t}\varphi(x)dx = -\partial_{\varphi}E[(u^{*},V^{*})(t)],
$$
we obtain the limit problem \eqref{limit-problem-neumann}.

With this energy at hand we can obtain the first nontrivial eigenvalue for our limit problem. Let us take $\lambda_{1}$ as 
\begin{equation}\label{neumann-lim-eig}
0<\lambda_{1} = \inf_{u^{*},V^{*} \in W_0}\frac{E(u^{*},V^{*})}{ \displaystyle |R_2|\int_{\Omega}(u^{*})^2 + \int_{R_1}(V^{*})^2},
\end{equation}
where $E(u^{*},V^{*})$ is given by \eqref{neumann-limit-energy} and
$$
W_0 = \Big\{u^{*} \in H^1(\Omega), V^{*} \in L^2(R_1): |R_2|\int_{\Omega}u^{*}+\int_{R_1}V^{*}=0 \Big\}.
$$

\begin{lemma}\label{c-limit}
Let $\lambda_{1}$ given by \eqref{neumann-lim-eig}, then $\lambda_{1} >0$ and therefore,
$$
E(u^{*},V^{*}) \geq \lambda_{1} \left(|R_2|\int_{\Omega} (u^{*})^2+\int_{R_1}(V^{*})^2\right),
$$
for every $u^{*}, V^{*}$ solution of \eqref{limit-problem-neumann}, such that $|R_2|\int_{\Omega}u^{*}+\int_{R_1}V^{*}=0.$
\end{lemma}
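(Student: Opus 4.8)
The plan is to argue by contradiction exactly along the lines of the proof of Lemma \ref{positive-constant}, the only simplification being that here the kernels $J^{*}$, $G^{*}$ and the weights $|R_2|$ are fixed, so no uniformity in $\varepsilon$ is required. Since the energy \eqref{neumann-limit-energy} is a sum of three nonnegative terms, we have $\lambda_1 \geq 0$; suppose for contradiction that $\lambda_1 = 0$. Then there is a minimizing sequence $(u_n,V_n) \in W_0$ with
$$
|R_2|\int_{\Omega}(u_n)^2 + \int_{R_1}(V_n)^2 = 1, \qquad E(u_n,V_n) \to 0 ,
$$
and, because each term in \eqref{neumann-limit-energy} is nonnegative, passing to the limit forces the three pieces to vanish separately:
$$
\int_{\Omega}|\nabla u_n|^2 \to 0, \qquad \int_{R_1}\int_{R_1}J^{*}(x-y)(V_n(y)-V_n(x))^2 \to 0, \qquad \int_{R_1}\int_{\Omega}G^{*}(x-y)(V_n(x)-|R_2|u_n(y))^2 \to 0 .
$$

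First I would extract the limits. The normalization bounds $u_n$ in $L^2(\Omega)$ and, together with the vanishing gradient term, in $H^1(\Omega)$; hence (up to a subsequence) $u_n \rightharpoonup u^{*}$ in $H^1(\Omega)$ and $u_n \to u^{*}$ strongly in $L^2(\Omega)$ by Rellich's theorem, while weak lower semicontinuity gives $\int_\Omega |\nabla u^{*}|^2 = 0$, so that $u^{*}$ is constant. For the nonlocal part, $V_n$ is bounded in $L^2(R_1)$; setting $k_n = \fint_{R_1} V_n$ and $z_n = V_n - k_n$ (so $\int_{R_1} z_n = 0$), the Poincaré-type inequality for the nonlocal operator (Lemma 3.1 in \cite{CERW}, see also \cite{andreu2010nonlocal}), applicable since $J^{*}(0)=J(0,0)>0$, yields a constant $C>0$ with
$$
\int_{R_1}\int_{R_1}J^{*}(x-y)(z_n(y)-z_n(x))^2\,dy\,dx \geq C \int_{R_1}(z_n)^2\,dx .
$$
The left-hand side coincides with the vanishing nonlocal term, so $z_n \to 0$ in $L^2(R_1)$; passing to a further subsequence with $k_n \to k^{*}$, we conclude $V_n \to k^{*}$ strongly in $L^2(R_1)$, with $k^{*}$ constant.

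The final step exploits the coupling term together with the mean-zero constraint. Using the strong $L^2$ convergences $u_n \to u^{*}$ and $V_n \to k^{*}$ I would pass to the limit in the coupling term,
$$
0 = \lim_{n\to\infty}\int_{R_1}\int_{\Omega}G^{*}(x-y)(V_n(x)-|R_2|u_n(y))^2\,dy\,dx = \int_{R_1}\int_{\Omega}G^{*}(x-y)(k^{*}-|R_2|u^{*})^2\,dy\,dx ,
$$
which, since $G^{*}$ is nonnegative and nontrivial on $R_1\times\Omega$, forces $k^{*} = |R_2|u^{*}$. Passing to the limit in the constraint $|R_2|\int_{\Omega}u_n + \int_{R_1}V_n = 0$ gives $|R_2||\Omega|u^{*} + |R_1|k^{*} = 0$; combined with $k^{*} = |R_2|u^{*}$ this yields $|R_2|(|\Omega|+|R_1|)u^{*} = 0$, hence $u^{*}=0$ and therefore $k^{*}=0$. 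But passing to the limit in the normalization gives $|R_2||\Omega|(u^{*})^2 + |R_1|(k^{*})^2 = 1$, which is incompatible with $u^{*}=k^{*}=0$. This contradiction shows $\lambda_1>0$, and the stated inequality then follows immediately from the definition \eqref{neumann-lim-eig} of $\lambda_1$ as an infimum over $W_0$, applied to any solution $(u^{*},V^{*})$ with $|R_2|\int_\Omega u^{*} + \int_{R_1}V^{*} = 0$.

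The step I expect to be the most delicate is deducing $k^{*}=|R_2|u^{*}$ from the vanishing coupling term: this requires that $G^{*}$ be not merely nonnegative but genuinely active on a set of positive measure in $R_1\times\Omega$, so that $(k^{*}-|R_2|u^{*})^2$ is forced to vanish there. This is precisely where the nontriviality hypothesis on $G$ enters, mirroring the corresponding point in the proof of Lemma \ref{positive-constant}.
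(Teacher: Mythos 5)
Your proof is correct and follows essentially the same route as the paper's: contradiction via a normalized minimizing sequence, Rellich compactness plus weak lower semicontinuity to force $u^{*}$ constant, the nonlocal Poincar\'e inequality of Lemma 3.1 in \cite{CERW} to force $V_n \to k^{*}$ constant, the vanishing coupling term to identify $k^{*}=|R_2|u^{*}$, and the mean-zero constraint against the normalization to conclude. If anything, your version is slightly tidier than the paper's, since taking $k_n = \fint_{R_1} V_n$ (rather than the plain integral, as the paper does) genuinely gives $\int_{R_1} z_n = 0$, and you make explicit the passage to the limit in the constraint that the paper leaves implicit.
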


\begin{proof} The proof is similar to the one of Lemma \ref{positive-constant} but we include the details for completeness. 
Let us suppose that $\lambda_{1} = 0$. This implies that there exists a subsequence $\{u_n^{*}\} \in H^{1}(\Omega)$ and 
$\{v_n^{*}\} \in L^2(R_1)$ such that
$$
|R_2|\int_{\Omega}u^{*}_n+\int_{R}V^{*}_n=0,
$$
$$
|R_2|\int_{\Omega}(u^{*}_n)^2+\int_{R}(V^{*}_n)^2=1,
$$
and
$$
\frac{|R_2|}{2}\int_{\Omega}|\nabla u^{*}_n|^2 dx+\frac{|R_2|}{4}\int_{R_1}\int_{R_1} J^{*}(x-y)(V^{*}_n(y)-V^{*}_n(x))^2dydx+\frac{1}{2}\int_{R_1}\int_{\Omega}G^{*}(x-y)(V^{*}_n(x)-|R_2|u^{*}_n(y))^2dydx \leq \frac{1}{n}.
$$
Taking the limit as $n\to \infty$ we obtain
$$
\lim_{n \to \infty}\left(\frac{|R_2|}{2}\int_{\Omega}|\nabla u^{*}_n|^2 dx\right)=0,
$$
$$
\lim_{n \to \infty} \frac{|R_2|}{4}\int_{R_1}\int_{R_1} J^{*}(x-y)(V^{*}_n(y)-V^{*}_n(x))^2dydx =0,
$$
and
$$
\lim_{n \to \infty}\left(\frac{1}{2}\int_{R_1}\int_{\Omega}G^{*}(x-y)(V^{*}_n(x)-|R_2|u^{*}_n(y))^2dydx\right)=0.
$$

Recalling that we have
$$
J^{*}(x-y) = J(x_1-y_1,0), \qquad \mbox{and} \qquad 
G^{*}(x-y) = G(x_1-y_1,0-y_2),
$$
it follows that $|R_2|\int_{\Omega}(u^{*}_n)^2 dx \leq 1$, that is, $\{u^{*}_n\}$ is bounded in $L^2(\Omega)$. 
Moreover, $\{u^{*}_n\}$ is also bounded in $H^1(\Omega)$. Then, we can extract a subsequence $\{u^{*}_{n_j}\} \in H^1(\Omega)$ 
which weakly converges to a limit $\hat{u} \in H^1(\Omega)$. From the weak convergence in $H^1(\Omega)$ we obtain strong convergence in $L^2(\Omega)$.
Then, we have that
$$
\frac{1}{2} |R_2|\int_{\Omega}|\nabla \hat{u}|^2 dx \leq \liminf_{n}\frac{1}{2} |R_2| \int_{\Omega}|\nabla u^{*}_n|^2 dx = 0.
$$
Hence, the limit $\hat{u}$ is constant in $\Omega$.

Also, it follows that $\{V^{*}_n\}$ is bounded in $L^{2}(R_1)$. Since 
$$\int_{R}|V^{*}_n|dx \leq C \left(\int_{\Omega}(V^{*}_n)^2 dx\right)^\frac{1}{2} \leq C,$$
we let
$k_{n} = \int_{R_1}V^{*}_n$, and obtain that $|k_n|\leq C$. Then, we can take a subsequence $\{V^{*}_{n_j}\}$ which converges in $L^2(R_1)$, 
to some limit $\hat{V}$ as $n_j \to \infty$. Consider $z_{n_{j}} = V^{*}_{n_j}-k_{n_j}$, this function is such that $\int_{R_1}k_{n_j}=0$.
By Lemma 3.1, in \cite{CERW}, there exists a constant $c_1>0$ such that 
$$
\int_{R_1}\int_{R_1}J(x_1-y_1,0)(z_{n_{j}}(y)-z_{n_{j}}(x))^2dy dx \geq c_1 \int_{R_1}(z_{n_{j}}(x))^2 dx.
$$
From this inequality we have
\begin{align*}
\lim_{n \to \infty}\left(\frac{|R_2|}{4}\int_{R_1}\int_{R_1} J^{*}(x-y)(z_{n_{j}}(y)-z_{n_{j}}(x))^2dydx\right)=\lim_{n \to \infty}\left(\frac{|R_2|}{4}\int_{R_1}\int_{R_1} J^{*}(x-y)(V^{*}_{n_j}(y)-V^{*}_{n_j}(x))^2dydx\right) \to 0,
\end{align*}
which yields
$$
0 \geq c_1 \lim_{n \to \infty} \int_{R_1}\int_{R_1}(z_{n_{j}}(x))^2 dx.
$$
We conclude that $z_{n_{j}} \to 0$ strongly in $L^2(R_1)$, which leads to $V^{*}_{n_j} \to \hat{V}$ strongly $ \in L^2(R_1)$.
Finally, as $u^{*}_n \to \hat{u}$ in $L^2(\Omega)$ and $V^{*}_{n} \to \hat{V}$ in $L^2(R_1)$, 
we can take the limit as $n \to \infty$ and obtain
\begin{align*}
\lim_{n \to \infty}\left(\frac{1}{2}\int_{R_1}\int_{\Omega}G^{*}(x-y)(V^{*}_{n}(x)-u^{*}_{n}(y))^2dydx\right)=\frac{1}{2}\int_{R_1}\int_{\Omega}G^{*}(x-y)(\hat{V}-|R_2|\hat{u})^2dydx \to 0.
\end{align*}
Then, we have that $\hat{V}-|R_2|\hat{u}=0$, that is $\hat{V}=|R_2|\hat{u}$. Hence, it follows that $\hat{V}=\hat{u}=0$, 
but this is a contradiction with the fact that 
$$
|R_2|\int_{\Omega}(u^{*}_n)^2+\int_{R}(V^{*}_n)^2=1
$$
since we have strong convergence in $L^2$. 
The proof is complete.
\end{proof}

Thanks to Lemma \ref{c-limit} we can show that solutions to the limit problem 
converge exponentially fast to the mean value of their initial condition.

\begin{theorem}
Given $u^{*}_{0} \in H^1(\Omega)$ and $V^*_0 \in L^2 (R_1)$, the solution to \eqref{limit-problem-neumann}, with initial data $u^{*}_{0},V^*_0$, 
converges to its mean value as $t \to \infty$, with an exponential rate $\lambda_1$ (given by \eqref{neumann-lim-eig}),
$$
\Big\| (u^{*},V^*)(\cdot,t)-\fint (u_{0}^{*},V^*_0) \Big\|_{L^2({\Omega}_0)} \leq C\left(\| (u^{*}_0,V^*_0) \|_{L^2(\Omega_0)}\right) 
 e ^{- \lambda_{1} t}.
$$
\end{theorem}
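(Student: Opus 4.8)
The plan is to run, with nonzero data, the very same Lyapunov argument that already appears in the uniqueness step at the end of the proof of Theorem \ref{teo1}; the only new ingredient is that we now measure the distance of the solution to the constant state rather than showing that this distance is zero.

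First I would reduce to a zero-mean problem. Let $\fint(u_0^*,V_0^*)$ denote the constant state toward which the flow equilibrates, and set $w := u^* - \fint(u_0^*,V_0^*)$ and $z := V^* - \fint(u_0^*,V_0^*)$. Because \eqref{limit-problem-neumann} is linear and this constant state is a stationary solution, the pair $(w,z)$ satisfies exactly the homogeneous system \eqref{diff-system}. Moreover, the conservation of mass established in Theorem \ref{mass-theo} guarantees that $(w,z)$ has vanishing mean for all $t\ge 0$, so that $(w,z)(\cdot,t)$ belongs for every $t$ to the admissible class $W_0$ on which the spectral gap of Lemma \ref{c-limit} is valid.

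Next I would introduce the Lyapunov functional $f(t):=\tfrac12\|(w,z)(\cdot,t)\|^2_{L^2(\Omega_0)}$, using the weighted norm \eqref{lim-prob-norm}. Multiplying the first equation of \eqref{diff-system} by $|R_2|\,w$ and integrating over $\Omega$, multiplying the second by $z$ and integrating over $R_1$, and adding — this is precisely the computation already carried out in the uniqueness step — the Neumann term drops out and the symmetry of $J^*,G^*$ together with Fubini reassemble the three squared contributions, yielding the dissipation identity $f'(t) = -2E(w,z)$, with $E$ the energy \eqref{neumann-limit-energy} that generates the flow in the weighted metric. I would then invoke Lemma \ref{c-limit}: since $(w,z)\in W_0$, we have $E(w,z)\ge \lambda_1\big(|R_2|\int_\Omega w^2 + \int_{R_1} z^2\big)$, which bounds $f'(t)$ from above by a negative multiple of $f(t)$. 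Gronwall's inequality then gives exponential decay of $f$, and taking square roots yields the claimed decay of the norm at rate $\lambda_1$ (given by \eqref{neumann-lim-eig}), with constant $C$ controlled by $\|(u_0^*,V_0^*)\|_{L^2(\Omega_0)}$.

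The main obstacle is the reduction step, not the Gronwall estimate. One must verify that the subtracted constant is a genuine stationary solution of \eqref{limit-problem-neumann} and, more delicately, that the difference $(w,z)$ actually lands in the zero-mean space $W_0$ used to define $\lambda_1$, since Lemma \ref{c-limit} supplies coercivity of $E$ only on $W_0$. This requires checking that the mass functional preserved by the flow coincides with the linear constraint $|R_2|\int_\Omega(\cdot)+\int_{R_1}(\cdot)=0$ entering \eqref{neumann-lim-eig}, and keeping the weighted metric \eqref{lim-prob-norm} consistent throughout the energy computation. Once this compatibility is secured, the exponential convergence follows verbatim from the dissipation identity combined with the spectral gap $\lambda_1>0$ of Lemma \ref{c-limit}.
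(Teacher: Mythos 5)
Your overall skeleton---subtract a stationary constant state, observe that the difference solves the homogeneous system \eqref{diff-system}, derive the dissipation identity $f'(t)=-2E$, invoke the spectral gap of Lemma \ref{c-limit}, and finish with Gronwall---is exactly the paper's argument. But your reduction step has a genuine error. You subtract the \emph{same} constant $\fint(u_0^*,V_0^*)$ from both components and assert that this constant pair is a stationary solution of \eqref{limit-problem-neumann}. It is not, unless $|R_2|=1$: the coupling terms vanish only when $V^{*}=|R_2|u^{*}$, so the equilibria are the \emph{skewed} constant pairs $(a,|R_2|a)$, never $(c,c)$ with $c\neq 0$. With your choice $w=u^{*}-c$, $z=V^{*}-c$, the pair $(w,z)$ does not satisfy \eqref{diff-system}; instead
\begin{equation}
\frac{\partial w}{\partial t}=\Delta w+\int_{R_1}G^{*}(x-y)\bigl(z(y,t)-|R_2|w(x,t)\bigr)dy
+c\,(1-|R_2|)\int_{R_1}G^{*}(x-y)\,dy,
\end{equation}
with an analogous leftover term (of opposite sign) in the $z$-equation. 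These constant sources have no sign, so the identity $f'(t)=-2E(w,z)$ fails and Gronwall cannot be run. This is precisely why the paper works with $h=|R_2|u^{*}-k$ and $z=V^{*}-k$ (equivalently, subtracts the skewed pair $(k/|R_2|,\,k)$): only then do the differences solve the homogeneous problem, and indeed the paper's own conclusion is $|R_2|u^{*}\to k$ and $V^{*}\to k$, i.e.\ convergence to the skewed equilibrium rather than to a single common constant.

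Second, the compatibility issue you flag as "to be checked" is not a routine check---as you set it up, it is false. Theorem \ref{mass-theo} conserves the \emph{unweighted} mass $\int_{\Omega}u^{*}+\int_{R_1}V^{*}$, whereas $W_0$ in \eqref{neumann-lim-eig} imposes the \emph{weighted} constraint $|R_2|\int_{\Omega}(\cdot)+\int_{R_1}(\cdot)=0$; these coincide only when $|R_2|=1$. The repair is to make the correct skewed subtraction with $a=\bigl(\int_{\Omega}u_0^{*}+\int_{R_1}V_0^{*}\bigr)/\bigl(|\Omega|+|R_2||R_1|\bigr)$, which by conservation of the unweighted mass keeps the difference in the unweighted zero-mean class for all times, and then to establish a positive spectral gap \emph{over that class}---which holds by repeating the compactness argument of Lemma \ref{c-limit}, although the resulting constant is not literally the $\lambda_1$ of \eqref{neumann-lim-eig}. (In fairness, the paper's own write-up glosses over this same weighted/unweighted discrepancy; but your proposal both defers the issue and, in addition, subtracts the wrong equilibrium, which is where the argument actually breaks.)
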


\begin{proof}
We know that $V^{*} = |R_2|u^{*} = k$, with $k$ constant, is also a solution of the problem \eqref{limit-problem-neumann}. Hence, the pair
$$(h(x,t) = |R_2|u^{*}(x,t)-k, z(x,t) = V^{*}(x,t)-k)$$ 
is also a solution of \eqref{limit-problem-neumann}. If we choose $$k = |R_2|\int_{\Omega}u_0^{*}+\int_{R_1}V^{*}_0$$ then, 
using that the mass is preserved in time, we get that $h$ and $z$ satisfy
$$
\int_{\Omega} h(x,t) dx+\int_{R_1}z(x,t)dx = 0.
$$
Let $$f(t) = \frac{|R_2|}{2}\int_{\Omega}h(x,t)^2 dx+\frac{1}{2}\int_{R_1}z(x,t)^2dx.$$ 
Differentiating $f$ with respect to $t$ we obtain
\begin{align*}
f'(t) &= |R_2|\int_{\Omega}\frac{\partial h}{\partial t}(x,t)h(x,t)dx+\int_{R_1}\frac{\partial z}{\partial t}(x,t)z(x,t) dx \\
& = |R_2|\int_{\partial \Omega} \frac{\partial h}{\partial \eta}(x,t)h(x,t) dx - |R_2|\int_{\Omega} | \nabla h(x,t)|^2 dx 
-\frac{|R_2|}{2}\int_{R_1}\int_{R_1}J^{*}(x-y)(z(y,t)-z(x,t))^2 dy dx \\
& \quad -\int_{R_1}\int_{\Omega}G^{*}(x-y)(z(x,t)-|R_2|h(y,t))z(x,t)dydx + \int_{R_1}\int_{\Omega}G^{*}(x-y)(z(x,t)-|R_2|h(y,t))|R_2|h(x,t)dydx \\
& = |R_2|\int_{\Omega} |\nabla h(x,t)|^2 dx -\frac{|R_2|}{2}\int_{R_1}\int_{R_1}J^{*}(x-y)(z(y,t)-z(x,t))^2 dy dx \\
& \quad - \int_{R_1}\int_{\Omega}G^{*}(x-y)(z(x,t)-|R_2|h(y,t))^2dydx \\
& = - 2E(h,z).
\end{align*}
From Lemma \ref{c-limit} we get
$$
E(h,z) \geq \lambda_{1} \left(|R_2|\int_{\Omega} h^2+\int_{R_1}z^2\right).
$$
Hence, we obtain 
$$f'(t) \leq -2 \lambda_{1} f(t)$$ so, by Gronwall's lemma we have that $$f(t) \leq e^{-2 \lambda_{1} t}f(0),$$ 
with $f(0) = \frac{1}{2}\left(|R_2|\int_{\Omega}h_0^2+\int_{R_1}z^{2}_0\right)$.
From this it follows that
$$
|R_2|\int_{\Omega}||R_2|u^{*}(x,t)-k|^2 dx+\int_{R_1}|V^{*}(x,t)-k|^{2}dx \leq C\left(\| (u^*_0,V_0^*) \|_{L^2(\Omega_0)}\right) e^{-2 \lambda_{1} t} \longrightarrow 0,
$$
as $t \to \infty$. In particular, it means that $|R_2|u^{*}\rightarrow k$ in $L^2(\Omega)$ and $V^{*}\longrightarrow k$ in $L^2(R_1)$, with $k$ given by the mean value of the initial condition.
\end{proof}

\section{Coupling on the boundary. Proof of Theorem \ref{teo2}} \label{Sect-3}

Let us first note that the existence and uniqueness of the solutions $\{u^{\varepsilon},v^{\varepsilon}\}$, of the problem \eqref{problemrescaled-energy2}, for each $\varepsilon >0$, was obtained in \cite{santos2020}. The arguments used to prove the conservation of mass and comparison principle also apply for the problem \eqref{problemrescaled-energy2} following the ideas presented in \cite{santos2020}.

Notice that we have an energy functional for the problem \eqref{problemrescaled-energy2} given by
\eqref{energy2},
$$
    E^b_\varepsilon (u,v)  \displaystyle:=\frac{1}{2}\int_{\Omega} |\nabla  u |^2 dx 
    + \frac{1}{4\varepsilon ^{2 N_{2}}}\int_{R_\varepsilon}\int_{R_\varepsilon}J(x-y)\left(v(y)-v(x)\right)^2 dydx  + \frac{1}{2\varepsilon^{N_2}}\int_{R_{\varepsilon}}\int_{\Gamma}G(x-y)dy\left(v(x)-u(y)\right)^2 d\sigma(y) dx.
$$
If we change variables as before we get
\begin{equation}\label{neumann-energy.99}
    E^b (u,v)  \displaystyle:=\frac{1}{2}\int_{\Omega} |\nabla  u |^2 dx 
    + \frac{1}{4}\int_{R}\int_{R}J_\varepsilon (x-y) |v(y)-v(x)|^2 dydx  + \frac{1}{2}\int_{R}\int_{\Gamma}G_\varepsilon (x-y)|v(x)-u(y)|^2 dy dx,
\end{equation}
with, as before,
$$
J_{\varepsilon}(x-y) = J(x_1-y_1,\varepsilon(\tilde{x}_2-\tilde{y}_2)),
\qquad
G_{\varepsilon}(x-y) = G(x_1-y_1,\varepsilon\tilde{x}_2-\tilde{y}_2).
$$

Now, we just observe that Lemma \ref{positive-constant} also works here.
One can define what is the analogous to the first non-zero eigenvalue for the problem \eqref{problemrescaled-energy2} as follows:
\begin{equation}\label{eps-eigen-energy2}
\alpha_{1}^{\varepsilon}=\inf_{u^{\varepsilon},v^{\varepsilon} \in \mathcal{A}}
\frac{E^b (u^{\varepsilon},v^{\varepsilon})}{
\displaystyle \int_{\Omega}(u^{\varepsilon})^2dx+\int_{R}(v^{\varepsilon})^{2}d\tilde{x}},
\end{equation}
with $$\mathcal{A} = \Big\{u^{\varepsilon} \in H^{1}(\Omega), v^{\varepsilon} \in L^{2}(R): \int_{\Omega}u^{\varepsilon}dx
+\int_{R}v^{\varepsilon}dx=0\Big\}.$$ 
For the positivity of $\alpha_{1}^{\varepsilon}$ we refer to \cite{andreu2010nonlocal}. A uniform lower bound independent of 
$\varepsilon$ can be proved as in Lemma \ref{positive-constant}.
The large time behavior for the solutions of \eqref{problemrescaled-energy2} can be obtained following the ideas developed in \cite{santos2020}. 
As we find in \cite{santos2020}, the solutions of \eqref{problemrescaled-energy2} converge exponentially 
to the mean value of the initial data as $t$ goes to $\infty$, for each $\varepsilon$.

Now, we prove Theorem \ref{teo2} taking the limit as $\varepsilon$ goes to zero in the weak form of \eqref{problemrescaled-energy2}.

\begin{proof}[Proof of the Theorem \ref{teo2}] We proceed as in the proof of Theorem \ref{teo1}. First, we obtain 
convergence along subsequences. 
From Lemma \ref{positive-constant}, since
$$
\int_{R}(v^{\varepsilon}(x))^2dx
$$
is bounded in $L^2(R)$ we can take a subsequence, also denoted by $\{v^{\varepsilon}\}$, such that 
\begin{subequations}
\begin{align}\label{eq:v.77}
v^{\varepsilon} \rightharpoonup v^{*} \quad \mbox{weakly} \quad \mbox{in} \quad L^\infty (0,T;L^2(R)) \quad \mbox{as} \quad \varepsilon \to 0.
\end{align}
\end{subequations} 
On the other hand, we have that
\begin{align*}
\int_{\Omega}(u^{\varepsilon}(x))^2dx \qquad \mbox{and} \qquad \int_{\Omega} |\nabla  u^{\varepsilon}(x)|^2dx, 
\end{align*}
are also bounded, and hence $u^{\varepsilon}$ is bounded in $H^1(\Omega)$. Hence, along a subsequence if necessary,
\begin{subequations}
\begin{align}
u^{\varepsilon} \rightharpoonup u^{*} \quad \mbox{weakly} \quad \mbox{in} \quad L^\infty(0,T;H^1(\Omega)) \quad \mbox{as} \quad \varepsilon \to 0, \label{eq:u.77} \\[10pt]
u^{\varepsilon} \to u^{*} \quad \mbox{strongly} \quad \mbox{in} \quad L^\infty(0,T;L^2(\Omega)) \quad \mbox{as} \quad \varepsilon \to 0.
\end{align}
\end{subequations}

Let us consider the weak form of the problem \eqref{problemrescaled-energy2} using for the equation for the variable $v^{\varepsilon}$
(the second equation of \eqref{problemrescaled-energy2}) a test function that depends only on the first variable, that is $\varphi = \varphi(x_1)$. 
We have,
\begin{subequations}
\begin{align}
& \int_{\Omega}\int_{\Omega} u^{\varepsilon}(x_1,x_2,T) \frac{\partial \varphi }{\partial t}(x_1,x_2,T) dx_2dx_1 
- \int_0^T \int_{\Omega}\int_{\Omega} u^{\varepsilon}(x_1,x_2,t) \frac{\partial \varphi }{\partial t}(x_1,x_2,t) dx_2dx_1 \\ 
& =
\int_{\Omega}\int_{\Omega} u_0^{\varepsilon}(x_1,x_2) \varphi (x_1,x_2,0) dx_2dx_1  -
 \int_0^T \int_{\Omega} \int_{\Omega} \nabla u^{\varepsilon} \nabla \varphi (x_1,x_2,t) dx_2dx_1 dt  \\
& \qquad + \int_0^T \int_{\Gamma} \int_{R_1}\int_{R_2}G_{\varepsilon}(x-y)(v^{\varepsilon}(y_1,\tilde{y_2},t)-u^{\varepsilon}(x_2,t))\varphi(x_1,x_2,t) d\tilde{y_2}dy_1d \sigma(x_2) dt,
\end{align}
\end{subequations}

\begin{subequations}
\begin{align}
&   \int_{R}\int_{R} v^{\varepsilon}(x_1,x_2,T) \frac{\partial \varphi }{\partial t}(x_1,x_2,T) dx_2dx_1 
- \int_0^T \int_{R}\int_{R} v^{\varepsilon}(x_1,x_2,t) \frac{\partial \varphi }{\partial t}(x_1,x_2,t) dx_2dx_1 \\
& = \int_{R}\int_{R} v_0^{\varepsilon}(x_1,x_2) \varphi(x_1,x_2,0) dx_2dx_1 \\
& \qquad + \int_0^T \int_{R_1}\int_{R_2}\int_{R_1}\int_{R_2} J_{\varepsilon}(x-y)\left(v^{\varepsilon}(y_1,\tilde{y}_2,t)-v^{\varepsilon}(x_1,\tilde{x}_2,t) \right)\varphi(x_1,t) d\tilde{y}_2 dy_1 d\tilde{x}_2 dx_1dt \\ 
& \qquad - \int_0^T \int_{R_1}\int_{R_2} \int_{\Gamma} G_{\varepsilon}(x-y)(v^{\varepsilon}(x_1,\tilde{x}_2,t)-  u^{\varepsilon}(y_2,t))\varphi(x_1,t) 
d\sigma (y_2) d\tilde{x}_2d\tilde{x}_1 dt.   
\end{align}
\end{subequations}

Now we can take the limit for $\varepsilon \to 0$ in each integral on the right side of the previous equations as we did in Theorem \ref{teo1}.
The only difference appears when we analyze the term  
$$
- \int_0^T \int_{R_1}\int_{R_2} \int_{\Gamma} G_{\varepsilon}(x-y)(v^{\varepsilon}(x_1,\tilde{x}_2,t)-  u^{\varepsilon}(y_2,t))\varphi(x_1,t) d\sigma(y_2) d\tilde{x}_2d\tilde{x}_1 dt.
$$
In this case, we need the fact that we have a well defined and compact trace operator $Tr: H^1(\Omega) \mapsto L^2 (\Gamma)$, see \cite{evans}, therefore
from the weak convergence 
$$
u^{\varepsilon} \rightharpoonup u^{*} \quad \mbox{weakly} \quad \mbox{in} \quad L^\infty(0,T;H^1(\Omega)) \quad \mbox{as} \quad \varepsilon \to 0,
$$
we obtain that, along a subsequence,
$$
u^{\varepsilon} \to u^{*} \quad \mbox{strongly} \quad \mbox{in} \quad L^\infty(0,T;L^2(\Gamma)) \quad \mbox{as} \quad \varepsilon \to 0.
$$

As before, since $v^{\varepsilon}$ is bounded in $L^\infty (0,T;L^{2}(R))$, then $V^{\varepsilon}$ is also bounded in $L^\infty(0,T;L^2(R_{1}))$ 
so, taking a subsequence if necessary
$$
V^{\varepsilon} \rightharpoonup V^{*} \quad \mbox{weakly} \quad \mbox{in} \quad L^\infty(0,T;L^2(R_1)).
$$
Using \eqref{Veps} we obtain
\begin{align*}
- \int_0^T \int_{R_1} \int_{\Gamma} G_{\varepsilon}(x-y)(V^{\varepsilon}(x_1,\tilde{x}_2,t)- |R_2| 
u^{\varepsilon}(y_2,t))\varphi(x_1,t) d\sigma (y_2) d\tilde{x}_1 dt.
\end{align*}
Now we can take the limit as $\varepsilon \to 0$ to obtain
$$
\begin{array}{l}
\displaystyle 
\lim_{\varepsilon \to 0} - \int_0^T \int_{R_1} \int_{\Gamma} G_{\varepsilon}(x-y)(V^{\varepsilon}(x_1,\tilde{x}_2,t)- |R_2| 
u^{\varepsilon}(y_2,t))\varphi(x_1,t) d\sigma (y_2) d\tilde{x}_1 dt \\[10pt] 
\displaystyle 
 = - \int_0^T \int_{R_1} \int_{\Gamma} G_{\varepsilon}(x-y)(V^{*}(x_1,\tilde{x}_2,t)- |R_2| 
u^{*}(y_2,t))\varphi(x_1,t) d\sigma (y_2) d\tilde{x}_1 dt. \label{limit-couple-partI.88}
\end{array}
$$

The rest of the terms can be handled as in Theorem \ref{teo1} to obtain
the weak form of the equations of the limit problem \eqref{limit-prob-neumann/robin}. 

Uniqueness of solutions to the limit problem can be obtained as in Theorem \ref{teo1} using the
energy
\begin{equation} \label{energy*}
\begin{array}{l}
\displaystyle E(u^{*},V^{*}) = \frac{|R_2|}{2}\int_{\Omega} |\nabla u^{*}|^{2}dx + \frac{|R_2|}{4}\int_{R_1} \int_{R_1} J^{*}(x-y)\left(V^{*}(y)+V^{*}(x)\right)^{2} dy dx \\[10pt]
\qquad\qquad\qquad\qquad\qquad \displaystyle +\frac{1}{2}\int_{R_1} \int_{\Gamma}G^{*}(x-y)(V^{*}(x)- |R_2|u^{*}(y))^{2} d\sigma (y) dx
\end{array}
\end{equation}
This completes the proof.
\end{proof}

Now, we gather some properties of the limit problem, \eqref{limit-prob-neumann/robin}.

The existence and uniqueness of the limit problem \eqref{limit-prob-neumann/robin} can be obtained using a 
fixed point argument as it was done in \cite{santos2020}. 

The mass conservation in time follows as in the previous section (see also  \cite{santos2020}).

To deal with the large time behavior we can be proceed as we did before, since the solutions of the problem \eqref{limit-prob-neumann/robin} also converges to the mean value of its initial condition as $t$ goes to zero. Notice that from \eqref{limit-prob-neumann/robin} we can define the associated eigenvalue problem. Let us consider $\alpha_{1}^{*}$ given by
\begin{equation}\label{eps-eigen-energy2.99}
\alpha_{1}^{*}=\inf_{u^{*},V^{*} \in \mathcal{A}_{0}}\frac{E(u^{*},V^{*})}{ \displaystyle |R_2|\int_{\Omega}(u^{*})^2dx+\int_{R_1}(V^{*})^{2}dx},
\end{equation}
where $E$ is given by \eqref{energy*} and 
$$
\mathcal{A}_{0} = \Big\{u^{*} \in H^{1}(\Omega), V^{*} \in L^{2}(R_1): |R_2|\int_{\Omega}u^{*}dx+\int_{R_1}V^{*}dx=0 \Big\}.
$$
One can show that $\alpha_1^*$ is strictly positive and then, the computations of the previous
rection can be adapted to prove that the solution of the limit problem \eqref{limit-prob-neumann/robin} converges exponentially fast for the mean value of the initial datum as $t$ goes to $\infty$.

\section{Numerical experiments} \label{sect-numer}
In this section we propose a discrete numerical scheme for the two models, \eqref{limit-problem-neumann} and \eqref{limit-prob-neumann/robin} described in this paper. To obtain a fully discretization of the equations in space and time we will use classical methods, centered finite differences for the interior points of the local part, forward and backward differences for the boundary points; while for the nonlocal region and the coupling terms we just approximate the involved integrals by Riemann sums. 
We use an explicit Euler discretization for the time variable.

As we mentioned in the Introduction, the continuous problems \eqref{limit-problem-neumann} and \eqref{limit-prob-neumann/robin} have some properties: well-posedness, comparison principle, conservation of mass and convergence to the mean value of the initial datum. 
In this section we will perform numerical simulations that illustrate these properties.
 
We will assume that $\Omega$ is a bidimensional rectangle $\Omega = \Omega_1 \times \Omega_2 = [a,b] \times [c,d]$ and we take the mesh parameter as $h_1 = \frac{b-a}{M-1}=\frac{d-c}{N-1}$. Let $h_1$, be same in the two directions. For the nonlocal part, 
the domain $R_1$ will be the segment $R_1 = [b,f]$, with $h_2 = \frac{f-b}{M-1}$.The time step $\triangle_t$ is give by the difference between the final time, $t_f$, with the initial time, $t_0$.

We approximate the continuous solution $u(x,y,t)$, for $(x,y,t) \in \Omega \times \mathbb{R}$ and $V(x,t)$, for $(x,t) \in R_1 \times \mathbb{R}$, by discrete values $u_{i,j}^{l} \approx u(x_i, y_j,t_l)$ and $V_{k}^{l} \approx V(z_k,t_l)$, respectively, with $i,k = 1, \cdots, M$, $j = 1, \cdots, N$.  For simplicity, let us consider a uniform mesh for the local and nonlocal part. The local domain $\Omega$ was discretized by the mesh $(x_{i},y_j)$, with $i = 1, \cdots, M$, $j = 1, \cdots, N$ while, the nonlocal domain, $R_1$ is discretized by the points $z_k$, $k=1, \cdots, M$. 

We will consider $h = h_1 = h_2$ (for simplicity). With this in mind, we call $x_1 = a$, $x_i = x_{i-1}+h$ and $x_{M} = x_{M-1}+h$, $y_1 = c$, $x_i = y_{j-1}+h$ and $y_{N} = y_{N-1}+h$, $z_1 = b$, $z_k = z_{k-1}+h$ and $z_{M} = z_{M-1}+h$.

Then, the numerical approximation of the problem \eqref{limit-problem-neumann}, 
is given by the following system of equations: for the local part we have,
\begin{align}\label{discret1}
\begin{cases}
 \displaystyle   u_{i,j}^{l+1} = u_{i,j}^{l}+\frac{\triangle_t}{h^2} \left(u_{i+1,j}^{l}+u_{i-1,j}^{l}+u_{i,j+1}^{l}+u_{i,j-1}^{l}-4u_{i,j}^{l}\right) \\[7pt]
 \qquad \qquad \qquad  \displaystyle   + \triangle_{t}h\sum_{k=1}^{M}G(x_i-z_{k},y_j)\left(V_{k}^{l}-|R_2|u_{i,j}^{l}\right), \quad i=2, \cdots, M-1, \quad j=2, \cdots, N-1\\[7pt]
 \displaystyle  u_{1,1}^{l+1} = u_{1,1}^{l}+\frac{\triangle_t}{h^2} \left(u_{1,2}^{l}+u_{2,1}^{l}-2u_{1,1}^{l}\right) \\[7pt]
   \displaystyle  u_{M,1}^{l+1} = u_{M,1}^{l}+\frac{\triangle_t}{h^2} \left(u_{M-1,1}^{l}+u_{M,2}^{l}-2u_{M,1}^{l}\right) \\[7pt]
   \displaystyle  u_{M,N}^{l+1} = u_{M,N}^{l}+\frac{\triangle_t}{h^2} \left(u_{M-1,N}^{l}+u_{M,N-1}^{l}-2u_{M,N}^{l}\right) \\[7pt]
    \displaystyle  u_{1,N}^{l+1} = u_{1,N}^{l}+\frac{\triangle_t}{h^2} \left(u_{2,N}^{l}+u_{1,N-1}^{l}-2u_{1,N}^{l}\right) \\[7pt]
    \displaystyle  u_{i,1}^{l+1} = u_{i,1}^{l}+\frac{\triangle_t}{h^2} \left(u_{i+1,1}^{l}+u_{i-1,1}^{l}+u_{i,2}^{l}-3u_{i,1}^{l}\right), \quad i=2, \cdots, M-1 \\[7pt]
    \displaystyle  u_{M,j}^{l+1} = u_{M,j}^{l}+\frac{\triangle_t}{h^2} \left(u_{M-1,j}^{l}+u_{M,j+1}^{l}+u_{M,j-1}^{l}-3u_{M,j}^{l}\right), \quad j=2, \cdots, N-1 \\[7pt]
    \displaystyle  u_{1,j}^{l+1} = u_{1,j}^{l}+\frac{\triangle_t}{h^2} \left(u_{2,j}^{l}+u_{1,j+1}^{l}+u_{1,j-1}^{l}-3u_{1,j}^{l}\right), \quad j=2, \cdots, N-1 \\[7pt]
    \displaystyle    u_{i,j}^{0} = u_{ij0} , \quad i=1, \cdots, M, \quad j=1, \cdots, N 
     \end{cases}
    \end{align}
    for $l>0$ and, for the nonlocal part,
    \begin{align}\label{discret2}
\begin{cases}
    \displaystyle  V_{k}^{l+1} = V_{k}^{l}+ \triangle_t R_2 h \sum_{p=1}^{N}J(z_k-z_p)(V_p^{l}-V_{k}^{l}) - \triangle_t h^2 \sum_{i=2}^{M-1}\sum_{j=2}^{N-1}G(x_i-z_k,y_j)(V_{k}^{l}-|R_2|u_{i,j}^{l}),\quad k=1, \cdots, M \\[7pt]
    \displaystyle    V_{k}^{0}  = {V}_{k0}, \quad k=1, \cdots, M,
    \end{cases}
    \end{align}
   for $l>0$. 

Similarly, the full discretization for the problem \eqref{limit-prob-neumann/robin} is given by: for the local part
\begin{align}\label{discret3}
\begin{cases}
 \displaystyle   u_{i,j}^{l+1} = u_{i,j}^{l}+\frac{\triangle_t}{h^2} \left(u_{i+1,j}^{l}+u_{i-1,j}^{l}+u_{i,j+1}^{l}+u_{i,j-1}^{l}-4u_{i,j}^{l}\right) \\[7pt]
 \displaystyle  u_{1,1}^{l+1} = u_{1,1}^{l}+\frac{\triangle_t}{h^2} \left(u_{1,2}^{l}+u_{2,1}^{l}-2u_{1,1}^{l}\right) \\[7pt]
   \displaystyle  u_{M,1}^{l+1} = u_{M,1}^{l}+\frac{\triangle_t}{h^2} \left(u_{M-1,1}^{l}+u_{M,2}^{l}-2u_{M,1}^{l}\right) \\[7pt]
   \displaystyle  u_{M,N}^{l+1} = u_{M,N}^{l}+\frac{\triangle_t}{h^2} \left(u_{M-1,N}^{l}+u_{M,N-1}^{l}-2u_{M,N}^{l}+h^2\sum_{k=1}^{M}G(x_M-z_k,y_N)(V_{k}^{l}-|R_2|u_{M,N}^{l})\right) \\[7pt]
    \displaystyle  u_{1,N}^{l+1} = u_{1,N}^{l}+\frac{\triangle_t}{h^2} \left(u_{2,N}^{l}+u_{1,N-1}^{l}-2u_{1,N}^{l}+h^2\sum_{k=1}^{M}G(x_1-z_{k},y_N)(V_{k}^{l}-|R_2|u_{1,N}^{l})\right) \\[7pt]
    \displaystyle  u_{i,1}^{l+1} = u_{i,1}^{l}+\frac{\triangle_t}{h^2} \left(u_{i+1,1}^{l}+u_{i-1,1}^{l}+u_{i,2}^{l}-3u_{i,1}^{l}\right), \quad i=2, \cdots, M-1 \\[7pt]
    \displaystyle  u_{i,N}^{l+1} = u_{i,N}^{l}+\frac{\triangle_t}{h^2} \left(u_{i+1,N}^{l}+u_{i-1,N}^{l}+u_{i,N-1}^{l}-3u_{i,N}^{l}+h^2\sum_{k=1}^{M}G(x_i-z_k,y_N)(V_{k}^{l}-|R_2|u_{i,N}^{l})\right), \\[7pt]
     \qquad   \qquad   \qquad   \qquad   \qquad i=2, \cdots, M-1 \\[7pt]
    \displaystyle  u_{M,j}^{l+1} = u_{M,j}^{l}+\frac{\triangle_t}{h^2} \left(u_{M-1,j}^{l}+u_{M,j+1}^{l}+u_{M,j-1}^{l}-3u_{M,j}^{l}\right), \quad j=2, \cdots, N-1 \\[7pt]
    \displaystyle  u_{1,j}^{l+1} = u_{1,j}^{l}+\frac{\triangle_t}{h^2} \left(u_{2,j}^{l}+u_{1,j+1}^{l}+u_{1,j-1}^{l}-3u_{1,j}^{l}\right), \quad j=2, \cdots, N-1,\\[7pt]
     \displaystyle    u_{i,j}^{0} = u_{ij0} , \quad i=1, \cdots, M, \quad j=1, \cdots, N,
     \end{cases}
    \end{align} 
    for $l>0$ and, for the nonlocal part 
    \begin{align}\label{discret4}
\begin{cases}
    \displaystyle  V_{k}^{l+1} = V_{k}^{l}+ \triangle_t R_2 h \sum_{p=1}^{N}J(z_k-z_p)(V_p^{l}-V_{k}^{l}) - \triangle_t h \sum_{i=1}^{M}G(x_i-z_k,y_N)(V_{k}^{l}-|R_2|u_{i,N}^{l})\quad k=1, \cdots, M \\[7pt]
   \displaystyle    V_{k}^{0}  = {V}_{k0}, \quad k=1, \cdots, M,
    \end{cases}
    \end{align} 
    for $l>0$.
    
    Notice that the main difference between the two discretizations occurs at the coupling terms, that in one case
    are given by 
    $$
    \sum_{k=1}^{M}G(x_i-z_{k},y_j)\left(V_{k}^{l}-|R_2|u_{i,j}^{l}\right)
    \qquad \mbox{and} \qquad
    \sum_{i=2}^{M-1}\sum_{j=2}^{N-1}G(x_i-z_k,y_j)(V_{k}^{l}-|R_2|u_{i,j}^{l})
    $$
    (these terms appear in the discretization of the model coupled via source terms, the
    double sums corresponds to discretizations of double integrals)
    and in the second discretization by
    $$
   \sum_{k=1}^{M}G(x_i-z_{k},y_N)(V_{k}^{l}-|R_2|u_{1,N}^{l}), \qquad \mbox{and} \qquad
      \sum_{i=1}^{M}G(x_i-z_k,y_N)(V_{k}^{l}-|R_2|u_{i,N}^{l})
    $$
(this corresponds to coupling on the boundary, remark that the sums here are discretizations of 
one dimensional integrals). 

For the experiments we will consider the domain $\Omega = [-1,1] \times [-1,1] $, $R_1 = [1,3]$, $R_2 = [0,1]$ and a time step which satisfies $\triangle_{t} \leq \frac{h^{2}}{4}$ (this comes from stability considerations). 

At the simulations we will use the kernel $J$, given by the following probability density:
\begin{align}\label{J}
J(x) = 
\begin{cases}
	\displaystyle \frac{1}{2}cos(x), \quad if \quad |x| \leq \frac{\pi}{2}, \\
	\displaystyle 0, \quad otherwise.
\end{cases}
\end{align} 
This particular kernel $J$ satisfies the hypothesis described before, $J$ is a nonnegative continuous function, symmetric, with $J(0)>0$ and integrable.

\subsection{Numerical experiments for coupling via source terms.}
Now, we will include some numerical experiments considering the fully discrete scheme for the problem \eqref{limit-problem-neumann}
given by \eqref{discret1}--\eqref{discret2}.

In this case, 
concerning the kernel $G$, as the problem \eqref{limit-problem-neumann} allows that particles can jump directly inside the interior of $\Omega$, we will consider $G$ as a function given by 
 \begin{align}\label{G}
G(x,y) = 
\begin{cases}
	\displaystyle \frac{1}{4}cos(x)cos(y), \quad if \quad |x,y| \leq \frac{\pi}{2}\\
	\displaystyle 0, \quad otherwise.
\end{cases}
\end{align} 
The kernel $G$ satisfies the hypothesis defined in the Introduction. 

\medskip

{\bf Numerical experiment 1.}
For this simulation we consider $M=N=11$, $ h = 0,2$, $\triangle_{t} = 0,005$, as initial conditions, we used $u_{0}(x,y) = 0$, $V_{0}(x) = 1$.
The mean value of the initial condition is $\approx 0,083$.

In Figure 3 we plot the evolution of the local and the nonlocal parts of the solution (for the local part we have 
depicted the solution $u(x,y,t)$ at three different time steps, as the same for the nonlocal part of the solution, $V(x,t)$, we can observe its evolution in four time steps). 
Both local and nonlocal parts of the solution converge towards the mean value of the numerical initial condition as $t$ increases. 

\begin{figure}[H]
\centering
\includegraphics[trim=6cm 2cm 0cm 0cm, width=16cm]{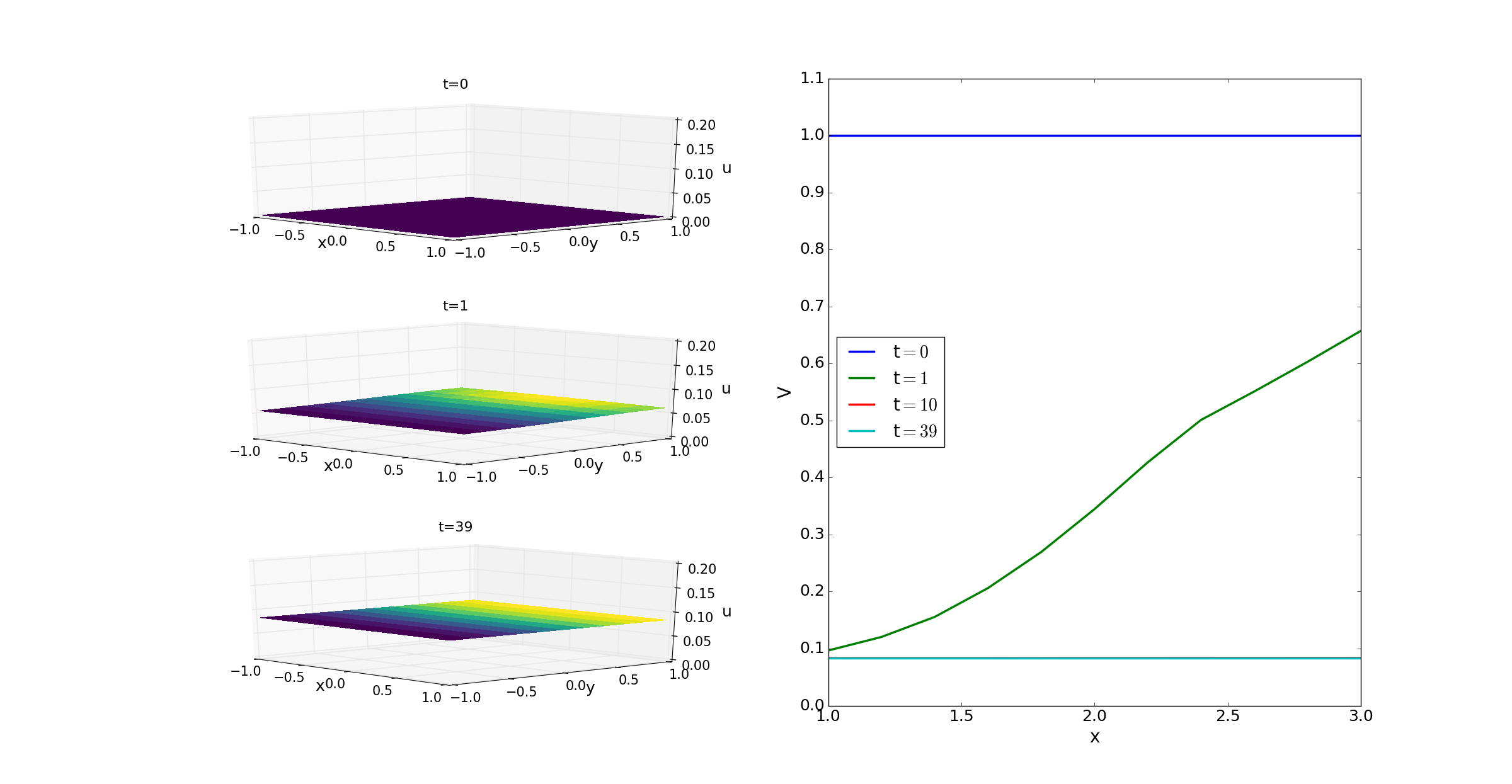}
\label{fig77}
\caption{The local part (left) and the nonlocal part (right) with two constants as initial conditions.}
\end{figure}

{\bf Numerical experiment 2.}
For this simulation we consider $M=N=11$, $ h = 0,2$, $\triangle_{t} = 0,005$, as initial conditions, we used $u_{0}(x,y) = cos\left(\frac{\pi x}{2}\right)cos\left(\frac{\pi y}{2}\right)$, $V_{0}(x) = 1$.
Now, the mean value of the initial condition $\approx 0,38$.

Figure 4 contains the plot of the local and the nonlocal parts of the solution. 
One can see that even with a not constant initial condition for the local part, we observe its fast convergence towards 
the mean value of the initial condition as $t$ increases. 

\begin{figure}[H]
\centering
\includegraphics[trim=6cm 2cm 0cm 0cm, width=16cm]{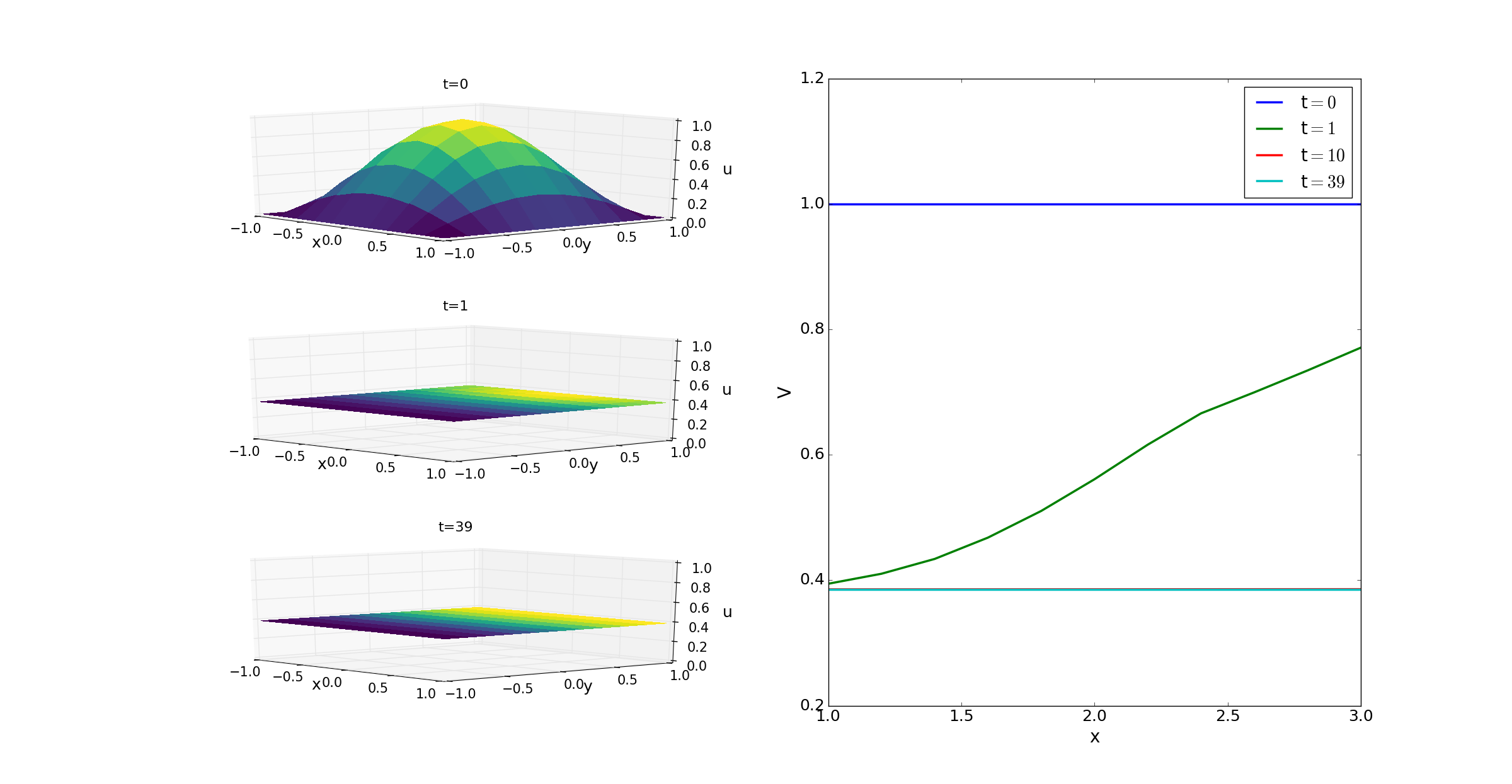}
\label{fig3}
\caption{The local part (left) and the nonlocal part (right) with non constant initial datum for the local part.}
\end{figure}

{\bf Numerical experiment 3.}
For this simulation we consider $M=N=11$, $ h = 0,2$, $\triangle_{t} = 0,005$, as initial conditions, we used $u_{0}(x,y) = cos\left(\frac{\pi x}{2}\right)cos\left(\frac{\pi y}{2}\right)$, $V_{0}(x) = 9-x^2$.
The mean value of the initial condition is $\approx 0,68$.

In Figure 5 both local and nonlocal initial conditions are non constants and they also 
verify the convergence to the mean of the initial condition as $t$ increases. Note that, even for $t=1$ the solution of the local part is closer to the mean value of the initial condition. For the nonlocal part, as $t=10$ the solution is very close to the mean of the initial condition that is subscribed by the last iteration.

\begin{figure}[H]
\centering
\includegraphics[trim=6cm 2cm 0cm 0cm, width=16cm]{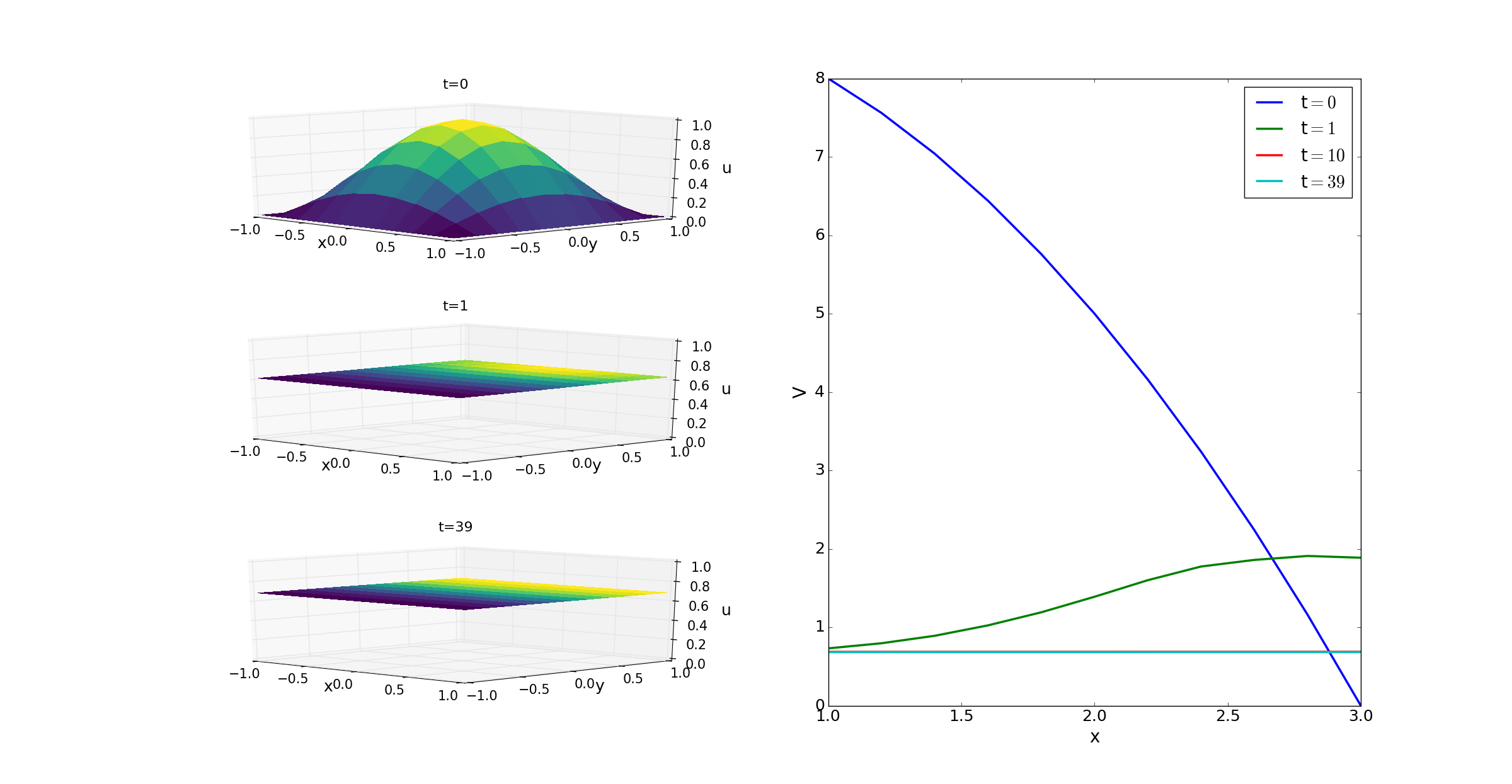}
\label{fig4}
\caption{The local part (left) and the nonlocal part (right) for non constant initial data.}
\end{figure}

\subsection{Numerical experiments for coupling via boundary terms.}
Now, we will include some numerical experiments considering the fully discrete scheme for the problem \eqref{limit-prob-neumann/robin} 
given by \eqref{discret3}--\eqref{discret4}. 

At the simulations we will use the same kernel $J$, as we define in \eqref{J} and the kernel $G$ as we define in \eqref{G}.

For simplicity, we have considered the local domain $\Omega$ as a square $\Omega =[-1,1]\times [-1,1]$, 
then for the coupling we will consider $\Gamma$ as a whole side of the domain $\Omega$, $\Gamma = \{1\} \times [-1,1]$. 

{\bf Numerical experiment 4.}
For this simulation we consider $M=N=11$, $ h = 0,2$, $\triangle_{t} = 0,005$, as initial conditions, we used $u_{0}(x,y) = 0$, $V_{0}(x) = 1$.
Mean value of the initial condition $\approx 0,31$.

In Figure 6 we plot the evolution of the local and the nonlocal parts of the solution. Both local and nonlocal parts of the solution converge towards the mean value of the numerical initial condition as $t$ increases. 

\begin{figure}[H]
\centering
\includegraphics[trim=6cm 2cm 0cm 0cm, width=16cm]{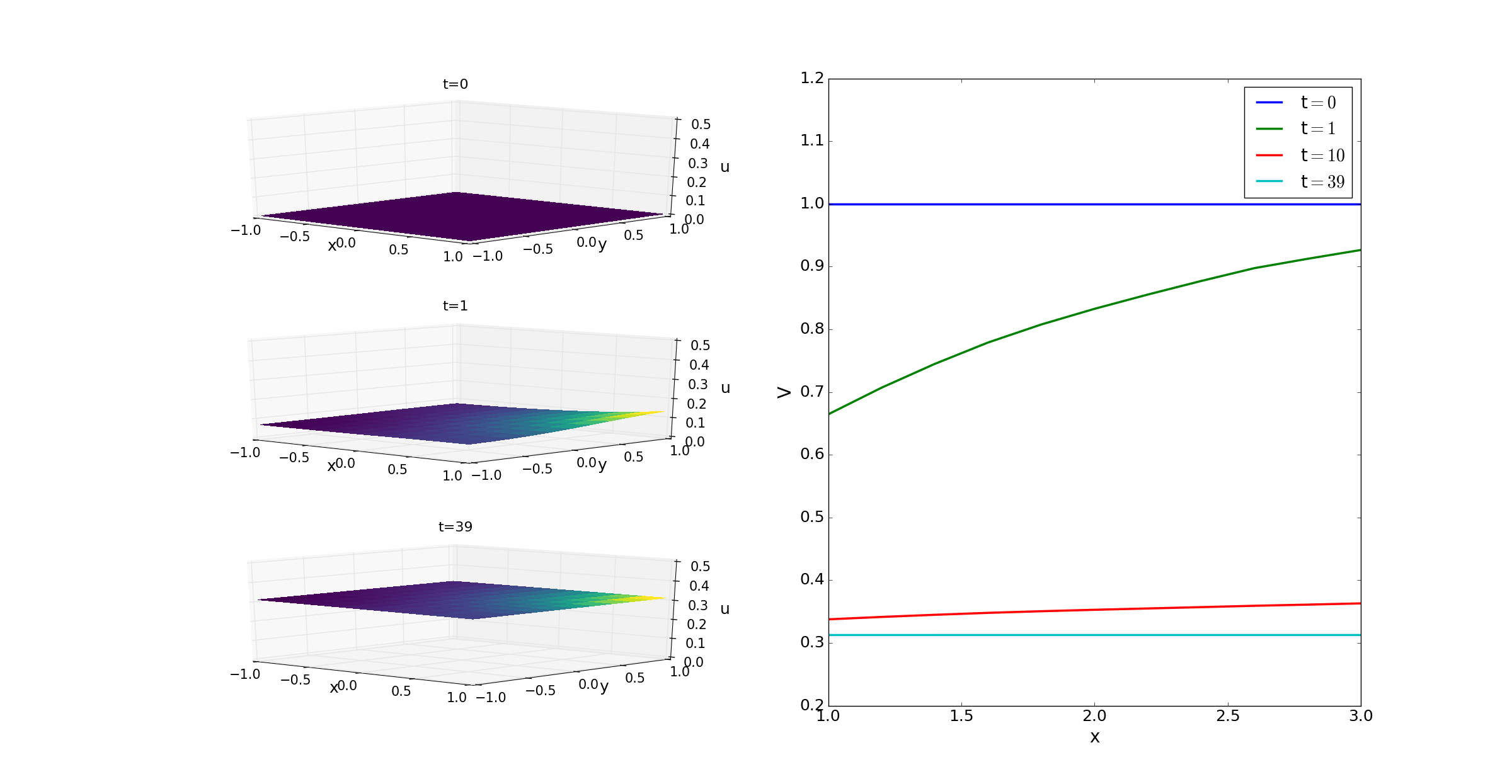}
\label{fig5}
\caption{The local part (left) and the nonlocal part (right).}
\end{figure}

{\bf Numerical experiment 5.}
For this simulation we consider $M=N=11$, $ h = 0,2$, $\triangle_{t} = 0,005$, as initial conditions, we used $u_{0}(x,y) = x^2+y^2$, $V_{0}(x) = 9-x^2$.
Mean value of the initial condition $\approx 1,99$.

In Figure 7, we observe that also when we take two non-constants initial conditions, the solution converges towards the mean value of the numerical initial condition as $t$ increases. We plot the solutions for specific time steps to follow the evolution.

\begin{figure}[H]
\centering
\includegraphics[trim=6cm 2cm 0cm 0cm, width=16cm]{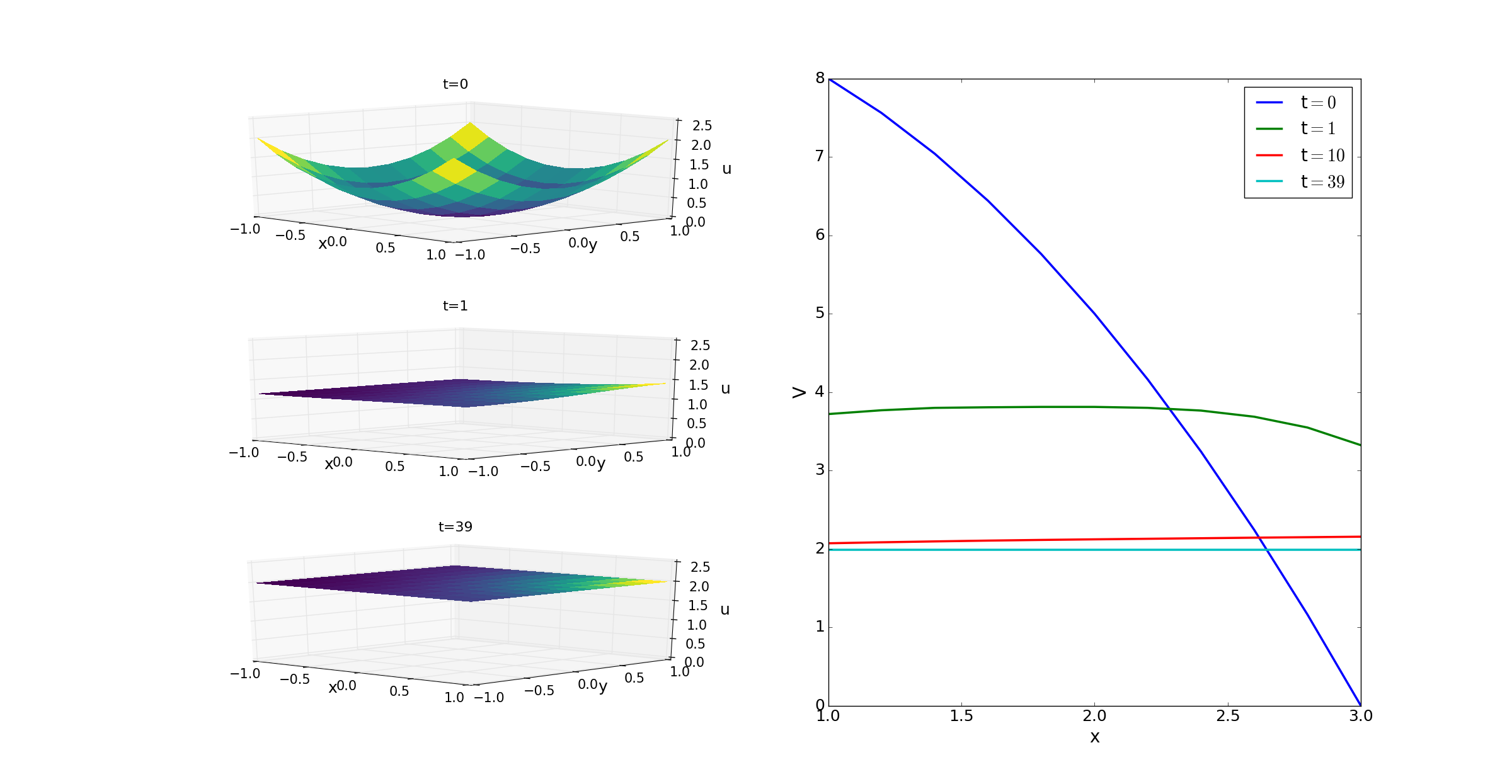}
\label{fig6}
\caption{The local part (left) and the nonlocal part (right).}
\end{figure}

{\bf Numerical experiment 6.}
For this simulation we consider $M=N=11$, $ h = 0,1$, $\triangle_{t} = 0,005$, as initial conditions, we used $u_{0}(x,y) = x^2+y^2$, $V_{0}(x) =x^2$.
Mean value of the initial condition $\approx 1,92$.
In Figure 8, we define the same initial condition for the local and nonlocal part. Note that we obtain the same behavior along the time, both local and nonlocal solution converge to the mean value of the initial condition. 
\begin{figure}[H]
\centering
\includegraphics[trim=6cm 2cm 0cm 0cm, width=16cm]{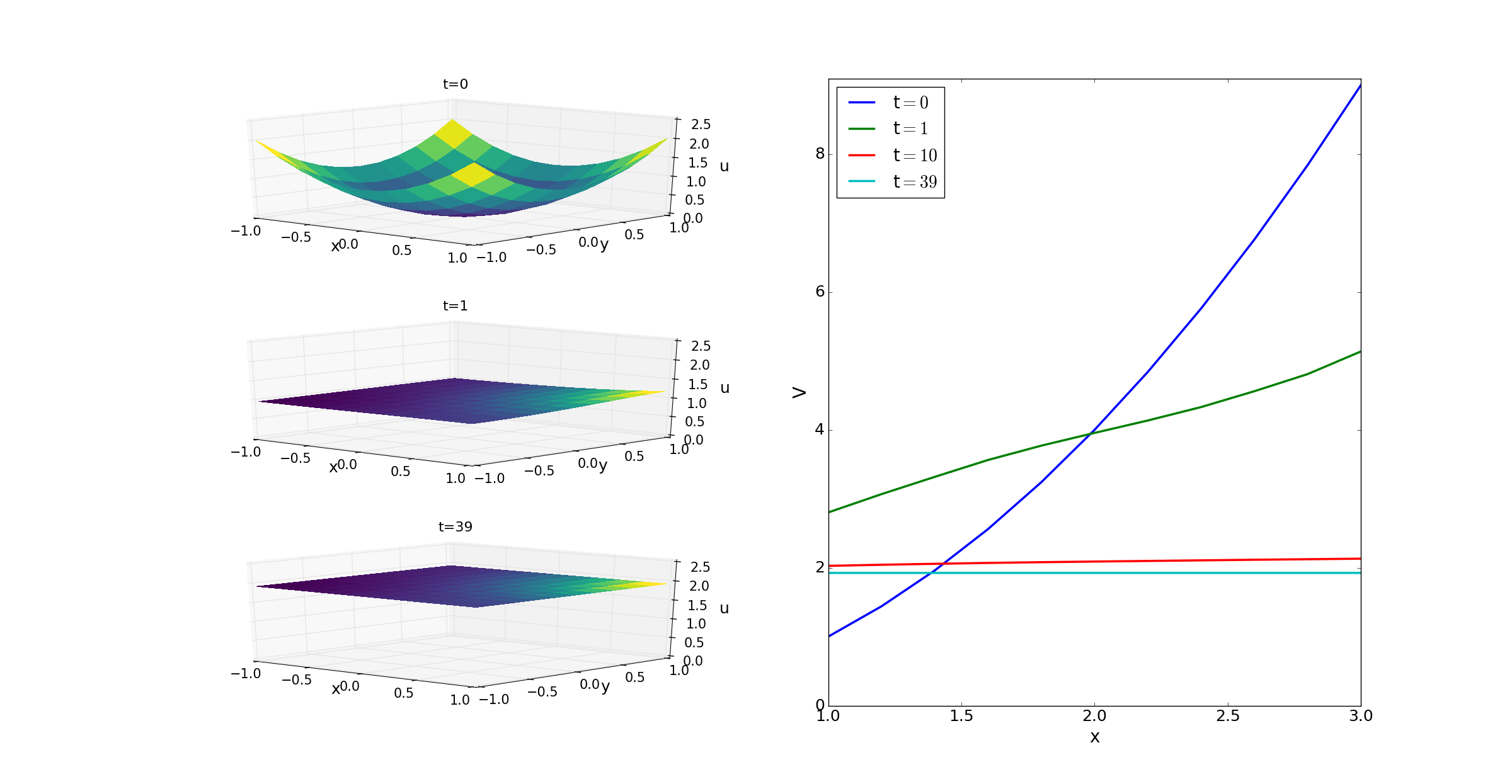}
\label{fig1}
\caption{Surface plot of the solution for the local part (left) and the nonlocal part (right).}
\end{figure}

\newpage

\medskip

\noindent{\large \textbf{Acknowledgments}}

BCS was financed by the Coordenação de Aperfeiçoamento de Pessoal de Nível Superior - Brasil (Capes) - No 88887369814/2019-00.

JDR is partially supported by CONICET grant PIP GI No 11220150100036CO
(Argentina), by  UBACyT grant 20020160100155BA (Argentina) and by the Spanish project MTM2015-70227-P.


\bigskip

{\bf addresses}

B. C. dos Santos and S. Oliva. \\
IME-USP \\
  Institute of Mathematics and Statistics\\
  University of São Paulo, Brazil \\

J. D. Rossi \\ 
 Department of Mathematics, FCEyN\\
  University of Buenos Aires, Argentina

\end{document}